\newtheorem{thm}{Theorem}
\newtheorem{lem}{Lemma}
\newtheorem{cor}{Corollary}
\begin{document}

\title{Hardy type inequalities and parametric Lamb equation}

\author{Makarov R.V., Nasibullin R.G.}

\maketitle
\date{}

\begin{abstract}
This paper is devoted to Hardy type inequalities with remainders for compactly supported smooth functions on open sets in the Euclidean space. We establish new inequalities with weight functions depending on the distance function to the boundary of the domain. One-dimensional $L_1$ and $L_p$ inequalities and their multidimensional analogues are proved. We consider spatial inequalities  in open convex  domains with the finite inner radius. Constants in these inequalities depend on the roots of parametric Lamb equation for the Bessel function and turn out to be sharp in some particular cases.
\end{abstract}

\textbf{subclass: }{26D10; 26D15} 

\textbf{keywords:} {Hardy type inequality, distance to a boundary, finite inner radius,  additional term, Bessel function} 

\section{Introduction}

Let $\Omega$  be an open proper subset of the Euclidean space $\mathbb{R}^n$. Denote by $C_0^1 (\Omega)$ the family of continuously differentiable functions $f:\Omega \to\mathbb{R}$ with compact supports lying in $\Omega$.

In this study we consider inequalities of Hardy type with weight functions depending on the distance function $\delta(x)$ to the boundary of the domain  $\Omega$, i.e.
$$
\delta(x)=\delta(x,\Omega):=\textrm{dist}(x,\partial\Omega).
$$
Hardy-type inequalities related to the distance function have been studied for a long time (see for example \cite{AvLap,Avkh_IM,AW_ZAMM,AW_Lamb,Avkh_Nas_SibMJ_2014,Avkh_AIA,Avkh_JMA,Av3,Av4,BEL,BM} and references therein). Let us remark that Hardy type inequalities with weight functions depending on the hyperbolic radius (\cite{APR,AvkhMS_15,ANSh_2018,ANSh_2019,F,FR}), distance to $\mathbb{R}_{+}$ \cite{Maz,Tidblom1}, distance to the origin \cite{BEL} are well known also.

For instance, the following $n$-dimensional Hardy inequality for convex domains
$$
\int\limits_\Omega|\nabla f(x)|^2dx\geq \frac{1}{4}\int\limits_\Omega\frac{|f(x)|^2}{\delta(x)^2}dx
$$
is valid for any $f\in C_0^1 (\Omega)$. It is well known that the constant $1/4$ is sharp for any convex subdomain of $\mathbb{R}^n$ although there is no function $f\not\equiv 0$ for which equality in this inequality is actually attained (see \cite{BM,BST,Bur,Bur1,Dav,Dav1,MatSob,MMP}). We refer to  \cite{AvLap,Avkh_IM,MMP}  for related results in non-convex domains.

More precisely, this paper is devoted to generalization and extension of the following sharp inequality proved by F.G. Avkhadiev  and K.-J. Wirths in \cite{AW_ZAMM}:
\emph{
If $m>0$, $0<\nu\leq 1/m$ and $\Omega$ is a convex domain with finite inner radius
$$
\delta_0 (\Omega)=\sup\{\delta(x):x\in\Omega\},
$$
then for any $f\in C_0^1(\Omega)$ the following sharp inequality holds
\begin{equation}\label{intr_f1}
\frac{1-\nu^2m^2}{4}\int\limits_\Omega\frac{|f(x)|^2}{\delta(x)^2}dx+\frac{C^2}{\delta_0(\Omega)^m}\int\limits_\Omega\frac{|f(x)|^2}{\delta(x)^{2-m}}dx \leq \int\limits_\Omega|\nabla f(x)|^2dx,
\end{equation}
where $C= C_\nu(m)$ is the constant satisfying the equation
\begin{equation}\label{intr_lamb_eq}
J_\nu\left(\frac{2}{m} C\right)+2CJ_\nu' \left(\frac{2}{m} C\right)=0
\end{equation}		
for  the Bessel function $J_\nu$ of order  $\nu$.}

Following papers \cite{AW_ZAMM} and \cite{AW_Lamb}, we call the quantity $C_\nu(m)$ defined as a positive root of the equation (\ref{intr_lamb_eq}) the Lamb constant (see also \cite{Nas_Ufa_2017} and \cite{Nas_LJM_2016}). In addition, equations of the form (\ref{intr_lamb_eq}) we call Lamb equation.

Note that inequality (\ref{intr_f1}) is valid also for functions that belong the closure $H_0^1(\Omega)$ of the family $C_0^1 (\Omega)$ of smooth function with finite Dirichlet integral and supported in~ $\Omega$. It should be also stressed  that multidimensional inequality (\ref{intr_f1})  is a bridge between Hardy's and Poincare's inequalities and has some specialties like an additional term and  sharp constants (see \cite{AW_ZAMM} for more information). That is why we believe that this inequality is one of the most beautiful inequalities.

 M.~Hoffmann-Ostenhof, T.~Hoffmann-Ostenhof and A.~Laptev  \cite{HoHoL} proved that for all $f\in H_0^1(\Omega)$ the following Hardy type inequality
\begin{equation}\label{in_HHL}
\int\limits_\Omega|\nabla f(x)|^2dx\geq \frac{1}{4}\int\limits_\Omega \frac{|f|^2}{\delta(x)^{2}}dx+\frac{1}{4}\frac{K(n)}{|\Omega|^{2/n}}\int\limits_\Omega|f(x)|^2dx
\end{equation}
holds, where $\Omega$ is a convex domain in $\mathbb{R}^{n} \;(n \geq 2)$, $|\mathbb{S}^{n-1}|$  is the surface area of the unit sphere $\mathbb{S}^{n-1}$ in the Euclidean space $\mathbb{R}^n$, $|\Omega|$  is the volume of the set $\Omega$ and
$$
K(n) = n\left(\frac{|\mathbb{S}^{n-1}|}{n}\right)^{2/n}.
$$
The the constant $1/4$ is optimal. In \cite{EL}, W.D. Evans and Roger T. Lewis  improved the constant in the additional term of (\ref{in_HHL}). 

We refer to \cite{AW_Lamb,EL,HoHoL,Nas_Sib_2019,Nas_Ufa_2017,Nas_LJM_2019,Nas_LJM_2016,Nas_MS_2019,PS,Tidblom}  for other interesting Hardy type inequalities with additional nonnegative terms. We pay attention only to the fact that Hardy inequalities with remainders were first obtained by V.G. Maz'ya \cite{Maz} in the case where $\Omega$ is a half-space and renewed interest in Hardy type inequalities with additional non-negative terms followed the work of H.~Brezis and M.~Marcus \cite{BM}.


Hardy inequalities are used in many branches of mathematical analysis and mathematical physics. For instance, one dimensional Hardy type inequalities are regarded as a tool from the theory of functions employed in the proofs of embedding theorems for functional spaces (see the monographs by S.L.~Sobolev \cite{Sobolev} and V.G.~Maz'ya \cite{Maz}).

In \cite{Tidblom}, J. Tidblom  proved the following  inequality for functions $f$  from  the corresponding Sobolev  space:
\begin{multline}\label{in_intr}
c_p\left(\int\limits_\Omega \frac{|f(x)|^p}{\delta^p(x)}dx
+\frac{(p-1)\sqrt{\pi}\;
\Gamma(\frac{n+p}{2})}{\Gamma(\frac{p+1}{2})\;\Gamma(\frac{n}{2})}\Biggl(\frac{|\mathbb{S}^{n-1}|}{n|\Omega|}\Biggr)^{\frac{p}{n}}\int\limits_\Omega|f(x)|^p
dx\right) \leq \int\limits_\Omega|\nabla f(x)|^p dx,
\end{multline}
where $p>1$,  $\Omega$ is a convex domain in $\mathbb{R}^{n} \;(n \geq 2)$,   $|\mathbb{S}^{n-1}|$  is the surface area of the unit sphere $\mathbb{S}^{n-1}$ in the Euclidean space $\mathbb{R}^n$, $|\Omega|$  is the volume of the set $\Omega$, $\Gamma$  is the gamma  function of Euler and
$$
c_p=\left(\frac{p-1}{p}\right)^p.
$$
The constant $c_p$ is the best one. Notice that inequality (\ref{in_intr}) is an extension of the  inequality (\ref{in_HHL})  for $p=2$ in the case of arbitrary $p>1$.

As we mentioned above we get generalization and extension of (\ref{intr_f1}). Namely, we prove $L_p$ version of the inequality  for $p\geq 1$. It is well known that for $L_p$-spaces with $0 < p < 1$ the Hardy inequality is not satisfied for arbitrary non-negative measurable functions. In spite of this Hardy type inequalities for $0< p < 1$ also known for non-negative non-increasing functions (see \cite{BST, Bur, Bur1}).

Let the constant  $c_\nu(m)$  be a number satisfying the conditions
$$
(1-2\lambda)J_\nu\left(\frac{2}{m}c_\nu(m)\right)+2c_\nu(m)J'_\nu\left(\frac{2}{m}c_\nu(m)\right)= 0\quad\text{and}\quad \frac{2}{m}c_\nu(m)\in(0,j_{\nu}),
$$
where $j_\nu$ is the first positive zero of the Bessel function $J_\nu$ of order  $\nu$. In this paper we assume that   $c_\nu(m)$ is the first root of the equation.

The last equation  for the Bessel function we will call parametric Lamb equation.   It is clear, if $\lambda = 0$, then the last equation coincides with equation (\ref{intr_lamb_eq}) that F.G.~Avkhadiev  and K.-J.~ Wirths used in \cite{AW_ZAMM}. We prove inequalities with constants depending on the roots of parametric Lamb equation for the Bessel function.

Let us consider 2 particular cases of this conditions.

\textbf{Example 1.} If $z=  \frac{2}{m} c_\nu(m)$, $\nu > 0$, $m>0$ and $\lambda = (1-m\nu)/2$, then this conditions take the form
$$
\nu J_\nu(z)+  z J'_\nu(z) = 0 \quad\text{and}\quad z\in(0,j_{\nu}).
$$
Since the following identity for the Bessel function holds
$$
\nu J_\nu(z)+  z J'_\nu(z) = z J_{\nu-1}(z),\ \ \  z>0, \ \ \ \nu >0,
$$
we get that  the Lamb constant $c_\nu (m) = \frac{m}{2}z =   \frac{m}{2}j_{\nu-1}$.

\textbf{Example 2.} If $\nu=1/2$ and $z=  \frac{2}{m}c_\nu(m)$, then this conditions  take the form
$$
2 m z \cos z - ( 4 \lambda + m -2) \sin z = 0 \quad\text{and}\quad  z\in (0,\pi).
$$
Note that in \cite{AW_Lamb}, F.G.~Avkhadiev and K.-J.~Wirths proved that the Lamb constant $z=\lambda_\nu(p)$ defined as the first positive root of the equation
$$
pJ_\nu (z) + 2zJ'_\nu(z)=0
$$  as a function in $p$ can be found as the solution of an initial value problem for the differential equation
$$
\frac{dz}{dp} = \frac{2z}{p^2-4\nu^2+4z^2}.
$$
For instance, the following are special cases of our results. Let  $\Omega$ be an open, convex set in $\mathbb{R}^n$ with finite inradius $\delta_0 = \delta_0(\Omega)$. If   $p\geq 1$, $m>1, \nu\in [0,1/m]$ and $\lambda\in\left[0,\frac{1+\nu m}{2}\right]$, then for any  $f\in C_0^1(\Omega)$ the following $L_p$-inequality
\begin{multline}\nonumber
(1-p\nu^2 m^2)\int\limits_\Omega \frac{|f(x)|^p}{\delta(x)^2} dx +\frac{4p\left(c_\nu^2(m)+(m-1)(\lambda -\lambda ^2)\right)}{\delta_0^m}\int\limits_\Omega \frac{|f(x)|^p}{\delta(x)^{2-m}} dx \leq\\
p^p\left(2(1+\nu m)-4\lambda^2\right)^p \int\limits_\Omega \frac{|\nabla f(x)|^{p}}{\delta(x)^{2-p}} dx
\end{multline}
is valid and if  $m>1, \nu\in [0,1/m)$ and $\lambda\in\left[0,\frac{1+\nu m}{2}\right]$, then for any  $f\in C_0^1(\Omega)$
\begin{multline}\nonumber
\int\limits_\Omega \frac{|f(x)|^p}{\delta(x)^2} dx +\frac{4p\left(c_\nu^2(m)+(m-1)(\lambda -\lambda ^2)\right)}{(1-\nu^2 m^2)\delta_0^m}\int\limits_\Omega \frac{|f(x)|^p}{\delta(x)^{2-m}} dx \leq\\
p^p \left(\frac{2}{1-\nu m}-\frac{4\lambda^2}{1-\nu^2m^2}\right)^p\int\limits_\Omega \frac{|f'(x)|^{p}}{\delta(x)^{2-p}} dx.
\end{multline}
Also, if   $m>1, \nu\in [0,1/m]$ and $\lambda\in\left[0,\frac{1+\nu m}{2}\right]$, then there is $L_2$-inequality
\begin{multline}\nonumber
\frac{1-\nu^2 m^2}{4}\int\limits_\Omega \frac{|g(x)|^2}{\delta(x)^2} dx +\frac{c_\nu^2(m)+(m-1)(\lambda-\lambda^2)}{\delta_0^m}\int\limits_\Omega \frac{|g(x)|^2}{\delta(x)^{2-m}} dx \leq\\
\leq 4\left(\frac{1+\nu m}{2}-\lambda^2\right) \int\limits_\Omega|\nabla g(x)|^2 dx.
\end{multline}
Let us compare the last inequality and sharp inequality (\ref{intr_f1}).
Obviously, if  $m>1$, $\nu\in [0,1/m]$, $\lambda\in\left[0,\frac{1+\nu m}{2}\right]$ and $\frac{1+\nu m}{2}-\lambda^2\leq 1/4$, then the first constant $(1-\nu^2 m^2)/4$ is sharp and
$$
c_\nu^2(m)+(m-1)(\lambda-\lambda^2)\leq  C_\nu^2(m).
$$
The paper is organized as follows. In Section \ref{sec1},  we prove one dimensional $L_1$ and $L_p$ inequalities. Also we get the new properties for the Bessel functions. To obtain $L_p$ inequalities we use H\"{o}lder's inequality and inequalities of Opial type (see \cite{Shum}). In Section \ref{sec2}, we establish inequalities in convex domains with the finite inner radius. We prove $L_1$, $L_2$ and $L_p$ spatial inequalities. To obtain multidimensional inequalities we use Avkhadiev's method that allow from one dimensional inequalities get  their corresponding spatial analogues. We refer to \cite{Av3} and \cite{Av4} for more information on this method (see also \cite{BEL}). Note that we give examples of our general results and compare  with known sharp inequalities. In particular, our inequalities with sharp constants (see, for instance, Example 3 and Corollary \ref{cor5} bellow).

\section{One dimensional inequalities}\label{sec1}
This section is devoted to one dimensional inequalities. We use these one dimensional inequalities to obtain their corresponding multidimensional analogues. To begin before we introduce some notations and give auxiliary statements. Also in this part  new properties of the Bessel function are obtained.

Suppose that $m>0, c>0$ and $0\leq\nu\leq \frac{1}{m}$. We will need the function
$$
y = f_{\nu,m}(x)= \sqrt{x}J_\nu\left(\frac{2}{m}c x^{m/2}\right),
$$
where by $J_\nu$ we denote the Bessel function of order $\nu$
$$
J_\nu(x) = \sum_{k=0}^\infty\frac{(-1)^kx^{2k+\nu}}{2^{2k+\nu }k! \Gamma (k+1+\nu)}, \ \  \ x\in[0,1].
$$
It is known (\cite{AW_ZAMM,AW_Lamb}) that for any $c > 0$ the function $f_{\nu,m}$ is a solution of the equation
\begin{equation}\label{difur}
y''+\left(\frac{1-\nu^2m^2}{4x^2}+\frac{c^2}{x^{2-m}}\right)y = 0, x\in \mathbb{R},
\end{equation}
and $f_{\nu,m}(x)>0, x\in (0,1]$ and $f'_{\nu,m}(x)>0, x\in (0,1)$.

See \cite{Watson} for more information about the Bessel functions and their properties.

\subsection{Auxiliary statements}

In the sequel we will use the following lemma.
\begin{lem}\label{le0}  Let $\nu>0$, $z\in(0,j_\nu)$, and let $j_\nu$ be the first positive zero of the Bessel function $J_\nu$ of order $\nu$. Then
$$
1-\left(\frac{J_{\nu-1}(z)-J_{\nu+1}(z)}{J_{\nu-1}(z)+J_{\nu+1}(z)}\right)^2-\frac{z^2}{\nu^2} =
\frac{z^2}{\nu^2}\left(-1+\frac{J_{\nu+1}(z)J_{\nu-1}(z)}{J_{\nu}^2(z)}\right).
$$
\end{lem}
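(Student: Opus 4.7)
The plan is to reduce both sides to a short algebraic identity by means of the two standard recurrence formulas for Bessel functions,
\[
J_{\nu-1}(z)+J_{\nu+1}(z)=\frac{2\nu}{z}J_\nu(z),\qquad J_{\nu-1}(z)-J_{\nu+1}(z)=2J_\nu'(z).
\]
Using these, I would first observe that on the interval $z\in(0,j_\nu)$ one has $J_\nu(z)>0$, so dividing the second identity by the first is legitimate, giving
\[
\frac{J_{\nu-1}(z)-J_{\nu+1}(z)}{J_{\nu-1}(z)+J_{\nu+1}(z)}=\frac{zJ_\nu'(z)}{\nu J_\nu(z)}.
\]

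Inserting this into the left-hand side of the claimed identity and moving the common $-z^2/\nu^2$ term to the right-hand side, the equality to prove reduces to
\[
1-\frac{z^{2}(J_\nu'(z))^{2}}{\nu^{2}J_\nu^{2}(z)}=\frac{z^{2}J_{\nu-1}(z)J_{\nu+1}(z)}{\nu^{2}J_\nu^{2}(z)},
\]
or, after clearing denominators,
\[
\nu^{2}J_\nu^{2}(z)-z^{2}(J_\nu'(z))^{2}=z^{2}J_{\nu-1}(z)J_{\nu+1}(z).
\]

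The final step is to recognise the right-hand side as an application of the elementary identity $(a+b)^{2}-(a-b)^{2}=4ab$ with $a=J_{\nu-1}(z)$, $b=J_{\nu+1}(z)$: by the two recurrences above,
\[
\left(\frac{2\nu}{z}J_\nu(z)\right)^{2}-\bigl(2J_\nu'(z)\bigr)^{2}=4J_{\nu-1}(z)J_{\nu+1}(z),
\]
which, after multiplication by $z^{2}/4$, is exactly the displayed equality. There is no real obstacle here; the only thing to be careful about is to note at the start that $z\in(0,j_\nu)$ and $\nu>0$ guarantee $\nu J_\nu(z)\neq 0$, so all divisions used are valid. The whole argument is a two- or three-line chain of Bessel recurrences and elementary algebra.
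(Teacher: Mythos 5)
Your proof is correct, and it is a cleaner organization of essentially the same elementary computation the paper performs. Both arguments rest only on the three-term recurrence $J_{\nu-1}+J_{\nu+1}=\tfrac{2\nu}{z}J_\nu$; the difference is in the decomposition. The paper expands $1-t^2$ with $t=\tfrac{J_{\nu-1}-J_{\nu+1}}{J_{\nu-1}+J_{\nu+1}}$ into $\tfrac{4J_{\nu+1}}{J_{\nu-1}+J_{\nu+1}}-\bigl(\tfrac{2J_{\nu+1}}{J_{\nu-1}+J_{\nu+1}}\bigr)^2$ and then substitutes recurrences step by step (including one involving $J_{\nu+2}$, which in the printed version even carries a typo in the coefficient), eventually reassembling the product $J_{\nu+1}J_{\nu-1}/J_\nu^2$. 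You instead bring in $J_{\nu-1}-J_{\nu+1}=2J_\nu'$, cancel the common $-z^2/\nu^2$ on both sides, and reduce the whole lemma to the single identity
\[
\nu^{2}J_\nu^{2}(z)-z^{2}\bigl(J_\nu'(z)\bigr)^{2}=z^{2}J_{\nu-1}(z)J_{\nu+1}(z),
\]
which is immediate from $(a+b)^2-(a-b)^2=4ab$. This buys a shorter and less error-prone derivation, avoids the $J_{\nu+2}$ recurrence altogether, and isolates a reusable identity (the same one that drives the monotonicity argument in the next lemma of the paper). Your remark that $z\in(0,j_\nu)$, $\nu>0$ keeps $\nu J_\nu(z)\neq 0$ correctly disposes of the only division that needs justification. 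No gaps.
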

\begin{proof}
Using the following equation for the Bessel function (see \cite{Watson})
$$
J_{\nu+1}(z) = \frac{2z}{\nu+1}\left(J_{\nu}(z)+J_{\nu+2}(z)\right)\quad\text{and}\quad J_{\nu-1}(z)=\frac{2\nu}{z} J_{\nu}(z)-J_{\nu+1}(z),
$$
by straightforward computation we obtain
\begin{multline}\nonumber
1-\frac{(J_{\nu-1}(z)-J_{\nu+1}(z))^2}{(J_{\nu-1}(z)+J_{\nu+1}(z))^2}-\frac{z^2}{\nu^2} = \\ \frac{4J_{\nu+1}(z)}{J_{\nu-1}(z)+J_{\nu+1}(z)}-\left(\frac{2J_{\nu+1}(z)}{J_{\nu-1}(z)+J_{\nu+1}(z)}\right)^2 -\frac{z^2}{\nu^2}=
\end{multline}
\begin{multline}\nonumber
=\frac{z^2}{\nu(\nu+1)}\frac{J_{\nu}(z)+J_{\nu+2}(z)}{J_{\nu}(z)}-\frac{z^2}{\nu^2}\frac{J_{\nu+1}^2(z)}{J_{\nu}^2(z)} -\frac{z^2}{\nu^2}=\\
\frac{z^2}{\nu^2}\left(\frac{\nu}{\nu+1}\frac{J_{\nu}(z)+J_{\nu+2}(z)}{J_{\nu}(z)}-\frac{J_{\nu+1}^2(z)}{J_{\nu}^2(z)} -1\right)=
\end{multline}
\begin{multline}\nonumber
=\frac{z^2}{\nu^2}\left(\frac{2\nu}{z}\frac{J_{\nu+1}(z)}{J_{\nu}(z)}-\frac{J_{\nu+1}^2(z)}{J_{\nu}^2(z)} -1\right)=\\\frac{z^2}{\nu^2}\left(\frac{J_{\nu+1}(z)}{J_{\nu}^2(z)}\left(\frac{2\nu}{z}J_{\nu}(z)-J_{\nu+1}(z)\right)-1\right)=
\end{multline}
$$
=\frac{z^2}{\nu^2}\left(-1+\frac{J_{\nu+1}(z)J_{\nu-1}(z)}{J_{\nu}^2(z)}\right).
$$
\end{proof}
The following lemma holds.

\begin{lem} \label{l1} If  $m>0$, $\nu\geq 0$ and $y = f_{\nu,m}(x)= \sqrt{x}J_\nu\left(\frac{2}{m}c x^{m/2}\right)$, $x\in[0,1]$, where $J_\nu$ is the Bessel function of order $\nu$. Then the function $\frac{xy'(x)}{y(x)}$ is decreasing and
$$
\sup_{x\in[0,1]}\frac{xy'(x)}{y(x)} = \lim_{x\to 0}\frac{xy'(x)}{y(x)} = \frac{1+\nu m}{2},
$$
$$
\inf _{x\in[0,1]}\frac{xy'(x)}{y(x)}  = \frac{y'(1)}{y(1)}=\frac{1}{2}+c\frac{J'_\nu\left(\frac{2}{m}c\right)}{J_\nu\left(\frac{2}{m}c\right)}.
$$
\end{lem}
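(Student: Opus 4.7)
The plan is to substitute $z = \frac{2}{m} c x^{m/2}$ (a strictly increasing map of $(0,1]$ onto $(0, 2c/m]$) and reduce the lemma to a monotonicity statement about the single Bessel ratio $h(z) := z J_\nu'(z)/J_\nu(z)$. A direct chain-rule computation gives
\begin{equation*}
\frac{x y'(x)}{y(x)} = \frac{1}{2} + \frac{m}{2}\, h(z),
\end{equation*}
so monotonicity of $xy'/y$ in $x$ is equivalent to monotonicity of $h$ in $z$. The two extremal values would then drop out: at $x = 1$ one reads off $h(2c/m)$ and recovers the stated infimum formula, while from the series $J_\nu(z) = z^\nu/(2^\nu \Gamma(\nu+1)) + O(z^{\nu+2})$ one obtains $\lim_{z \to 0^+} h(z) = \nu$ and hence the supremum $(1+\nu m)/2$.

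For the monotonicity, I would differentiate $h$ and eliminate $J_\nu''$ by Bessel's equation $z^2 J_\nu'' + z J_\nu' + (z^2 - \nu^2) J_\nu = 0$, producing the Riccati-type identity
\begin{equation*}
h'(z) = \frac{\nu^2 - z^2 - h^2(z)}{z}.
\end{equation*}
Thus, defining $\Psi(z) := h^2(z) + z^2 - \nu^2$, the claim reduces to showing $\Psi(z) > 0$ for $z \in (0, 2c/m]$.

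To establish positivity of $\Psi$ I would differentiate it once more and use the Riccati identity to see that $\Psi$ satisfies the first-order linear equation
\begin{equation*}
\Psi'(z) + \frac{2h(z)}{z}\,\Psi(z) = 2z.
\end{equation*}
Since $2h(z)/z = 2 J_\nu'(z)/J_\nu(z)$, the integrating factor is $J_\nu(z)^2$, and one gets $\bigl(J_\nu(z)^2 \Psi(z)\bigr)' = 2 z J_\nu(z)^2$. The asymptotics $J_\nu^2 = O(z^{2\nu})$ and $\Psi(z) = O(z^2)$ near $z = 0$ annihilate the boundary term, leaving
\begin{equation*}
J_\nu(z)^2 \Psi(z) = \int_0^z 2t\, J_\nu(t)^2\, dt > 0.
\end{equation*}
Since $J_\nu > 0$ on $(0, 2c/m]$ (which is precisely the positivity of $f_{\nu,m}$ assumed in the setup), this forces $\Psi > 0$ and hence $h'(z) < 0$.

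The main obstacle will be the positivity step: a naive pointwise bound on $\Psi$ using series expansions or recurrences is awkward. The decisive idea is to notice that $\Psi$ itself satisfies a linear ODE whose integrating factor is exactly $J_\nu^2$, turning the desired inequality into a manifestly positive integral. Everything else in the argument -- the chain-rule computation, the identification of boundary values, and the Riccati identity for $h$ -- is routine.
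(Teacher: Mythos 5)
Your proof is correct, and it takes a genuinely different route from the paper's. The paper works directly in the $x$ variable: it differentiates $xy'/y$ using the ODE (\ref{difur}) satisfied by $f_{\nu,m}$, splits into the cases $\nu=0$ and $\nu>0$, rewrites everything via the recurrences $2J_\nu'=J_{\nu-1}-J_{\nu+1}$ and $\tfrac{2\nu}{z}J_\nu=J_{\nu-1}+J_{\nu+1}$, invokes its auxiliary Lemma \ref{le0} to arrive at
$\frac{d}{dx}\bigl(\tfrac{xy'}{y}\bigr)=c^2x^{m-1}\bigl(-1+\tfrac{J_{\nu+1}(z)J_{\nu-1}(z)}{J_\nu^2(z)}\bigr)$,
and then settles the sign by quoting Watson's series identity, i.e.\ essentially the Tur\'an-type inequality $J_\nu^2(z)>J_{\nu-1}(z)J_{\nu+1}(z)$ on $(0,j_\nu)$. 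You instead pass to $z$ and the ratio $h(z)=zJ_\nu'(z)/J_\nu(z)$, derive the Riccati identity $zh'=\nu^2-z^2-h^2$ from Bessel's equation, and prove positivity of $\Psi=h^2+z^2-\nu^2$ via the integrating factor $J_\nu^2$, which produces the Lommel integral $J_\nu^2(z)\Psi(z)=\int_0^z 2t\,J_\nu^2(t)\,dt>0$. The two positivity inputs are in fact the same quantity in disguise, since $J_\nu^2(z)\Psi(z)=z^2J_\nu'^2(z)+(z^2-\nu^2)J_\nu^2(z)=z^2\bigl(J_\nu^2(z)-J_{\nu-1}(z)J_{\nu+1}(z)\bigr)$; but your derivation is self-contained (only Bessel's equation and the leading series term are used), needs no case split at $\nu=0$, no recurrence manipulations, and no citation of Watson's product-series formula, whereas the paper's version makes the link to the Tur\'an inequality explicit at the cost of an extra lemma. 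The boundary-value identifications (the limit $h(z)\to\nu$ as $z\to 0^+$ giving $(1+\nu m)/2$, and the evaluation at $x=1$) agree with the paper's; just make sure to record, as you do implicitly, that the whole argument lives on $z\in(0,j_\nu)$ where $J_\nu>0$, and that $\Psi(z)=O(z^2)$ near $0$ (which follows from $h(z)=\nu-\tfrac{z^2}{2(\nu+1)}+O(z^4)$) so the boundary term in the integration genuinely vanishes for all $\nu\ge 0$.
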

\begin{proof}  Let $\nu =0$ and $z = \frac{2}{m} cx^{m/2}$. Obviously,
$$
\frac{d}{dx}\left(\frac{xy'(x)}{y(x)}\right) = \frac{y'(x)}{y(x)}+x\left(\frac{y''(x)y(x)-y'^2(x)}{y^2(x)}\right).
$$
Using equation (\ref{difur}) and that  $y(x)=  \sqrt{x}J_0\left(\frac{2}{m}c x^{m/2}\right)$, by straightforward computation we get
$$
\frac{d}{dx}\left(\frac{xy'(x)}{y(x)}\right) = \frac{1}{2x} +cx^{\frac{m}{2}-1}\frac{J_0'(z)}{J_0(z)}-\frac{1}{4x}-\frac{c^2}{x^{1-m}}-x\left(\frac{1}{2x} +cx^{\frac{m}{2}-1}\frac{J_0'(z)}{J_0(z)}\right)^2=
$$
$$
=-c^2x^{m-1}\left(1+\left(\frac{J'_0(z)}{J_0(z)}\right)^2\right)\leq 0, \ \ \ \  z\in (0,j_0),
$$
where  $j_0$ is the first positive zero of $J_0$.

Hence the function $xy'(x)/y(x)$ is decreasing. Consequently,
$$
\sup_{x\in[0,1]}\frac{xy'(x)}{y(x)} = \lim_{x\to 0}\frac{xy'(x)}{y(x)}
$$
and
$$
\inf _{x\in[0,1]}\frac{xy'(x)}{y(x)}  = \frac{y'(1)}{y(1)}=\frac{1}{2}+c\frac{J'_0\left(\frac{2}{m}c\right)}{J_0\left(\frac{2}{m}c\right)}.
$$
Now we suppose that $\nu >0$ and $z = \frac{2}{m}c x^{m/2}, x\in (0,1)$. Notice that if the first positive zero of $J_\nu$ is denoted by $j_\nu$, then $2c/m\in(0,j_\nu)$ and $z\in(0,j_\nu)$.

We also shall show that
$$
A:=\frac{d}{dx}\left(\frac{xy'(x)}{y(x)}\right)\leq 0.
$$
Obviously,
$$
A= \frac{y'(x)}{y(x)}+x\left(\frac{y''(x)}{y(x)}-\left(\frac{y'(x)}{y(x)}\right)^2\right).
$$
Using equation (\ref{difur}), we get that
$$
A=\frac{y'(x)}{y(x)}-\left(\frac{1-\nu^2m^2}{4x}+\frac{c^2}{x^{1-m}}\right)-x\left(\frac{y'(x)}{y(x)}\right)^2.
$$
Evidently,
$$
\frac{y'(x)}{y(x)} = \frac{1}{2x}+cx^{\frac{m}{2}-1}\frac{J'_\nu(z)}{J_\nu(z)}.
$$
Since (see \cite[p. 45]{Watson})
$$
J_\nu'(z) = \frac{1}{2} \left(J_{\nu-1}(z)-J_{\nu+1}(z)\right)\quad\text{and}\quad J_\nu(z) = \frac{z}{2\nu} \left(J_{\nu-1}(z)+J_{\nu+1}(z)\right),
$$
we have
$$
\frac{y'(x)}{y(x)} =\frac{1}{x}\left(\frac{1}{2}+\frac{m\nu}{2}\frac{J_{\nu-1}(z)-J_{\nu+1}(z)}{J_{\nu-1}(z)+J_{\nu+1}(z)}\right).
$$
Consequently,
$$
A= \frac{m^2\nu^2}{4x}\left(1-\left(\frac{J_{\nu-1}(z)-J_{\nu+1}(z)}{J_{\nu-1}(z)+J_{\nu+1}(z)}\right)^2-\frac{z^2}{\nu^2}\right).
$$
Using Lemma \ref{le0} and that $z = \frac{2}{m} cx^{m/2}$, obviously we get
$$
A= c^2x^{m-1}\left(-1+\frac{J_{\nu+1}(z)J_{\nu-1}(z)}{J_{\nu}^2(z)}\right).
$$
It is known that (see \cite[p. 152]{Watson})
$$
\frac{z^2}{4}\left(J_{\nu-1}^2(z)-J_{\nu-2}(z)J_\nu(z)\right)=\sum_{n=0}^\infty(\nu+2n)J_{\nu+2n}^2(z).
$$
Thus
$$
A= -\frac{q^2\lambda^2_\nu}{z^2}\frac{x^{q-1}}{J_\nu^2(z)}\sum_{n=0}^\infty(\nu+2n+1)J_{\nu+2n+1}^2(z)\leq 0.
$$
It remains to check that
$$
 \lim_{x\to 0}\frac{xy'(x)}{y(x)} = \lim_{x\to 0}\left(\frac{1}{2}+cx^{\frac{m}{2}}\frac{J'_\nu(\frac{2}{m}c x^{m/2})}{J_\nu(\frac{2}{m}c x^{m/2})}\right)=\frac{1+\nu m}{2}.
$$
Using the expansion for the Bessel function it is easy to obtain that
\begin{multline}\nonumber
\frac{J'_\nu(\frac{2}{m}c x^{m/2})}{J_\nu(\frac{2}{m}c x^{m/2})} = \frac{m}{2c}\frac{\sum_{k=0}^\infty\frac{(-1)^k(2k+\nu)(c/m)^{2k+\nu}x^{\frac{m}{2}(2k+\nu-1)}}{k! \Gamma (k+1+\nu)}}{\sum_{k=0}^\infty\frac{(-1)^k(c/m)^{2k+\nu}x^{\frac{m}{2}(2k+\nu)}}{k! \Gamma (k+1+\nu)}}=\\
\frac{m\nu}{2cx^{m/2}}\frac{1+\sum_{k=1}^\infty\frac{(-1)^k(2k/\nu+1)(c/m)^{2k}\Gamma(1+\nu)x^{mk}}{k! \Gamma (k+1+\nu)}}{1+\sum_{k=1}^\infty\frac{(-1)^k(c/m)^{2k}\Gamma(1+\nu)x^{mk}}{k! \Gamma (k+1+\nu)}}=\frac{m\nu}{2cx^{m/2}}G(x),
\end{multline}
where $G(x)\to 1$ as $x\to 0$.

Hence,
$$
\lim_{x\to 0}\left(\frac{1}{2}+cx^{\frac{m}{2}}\frac{J'_\nu(\frac{2}{m}c x^{m/2})}{J_\nu(\frac{2}{m}c x^{m/2})}\right)=\lim_{x\to 0}\left(\frac{1}{2}+\frac{m\nu}{2}G(x)\right)=\frac{1+\nu m}{2}.
$$
This concludes the proof of Lemma \ref{l1}.
\end{proof}
\begin{cor} \label{cor1}
If $\lambda\to \frac{1+\nu m}{2}$, then the lamb constant $c_\nu(m)\to 0$.
\end{cor}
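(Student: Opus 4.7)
The plan is to recast the defining equation of $c_\nu(m)$ as $\psi(z) = \lambda$, where $z = \frac{2}{m}c_\nu(m)$ and $\psi$ is a single $\lambda$-independent function of $z$, and then to read off the conclusion from the monotonicity and boundary behaviour of $\psi$ already established in Lemma~\ref{l1}.

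First I would divide the parametric Lamb equation $(1-2\lambda)J_\nu(z)+mzJ'_\nu(z)=0$ through by $2J_\nu(z)$, which is nonzero on $(0,j_\nu)$, to obtain
$$
\psi(z) := \frac{1}{2} + \frac{m}{2}\cdot\frac{zJ'_\nu(z)}{J_\nu(z)} = \lambda.
$$
Next I would translate Lemma~\ref{l1} into a statement purely about $\psi$: with the substitution $u = \frac{2}{m}cx^{m/2}$, the computations done in the proof of that lemma amount to $\frac{xy'(x)}{y(x)} = \psi(u)$, and since $u$ runs strictly monotonically over $(0,2c/m]$ as $x$ runs over $(0,1]$, the monotone decrease of $\frac{xy'(x)}{y(x)}$ in $x$ transfers to monotone decrease of $\psi$ in $u$ on $(0,2c/m]$. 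Letting $c$ vary arbitrarily close to $mj_\nu/2$ then extends the monotonicity of $\psi$ to the whole interval $(0,j_\nu)$. Simultaneously, Lemma~\ref{l1} identifies the limit
$$
\lim_{z\to 0^+}\psi(z) = \frac{1+\nu m}{2}.
$$

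From here the corollary is immediate. For every admissible $\lambda < (1+\nu m)/2$ the equation $\psi(z) = \lambda$ has a unique solution $z(\lambda) \in (0,j_\nu)$; by the continuity, strict decrease and the boundary value of $\psi$ above, $z(\lambda) \to 0$ as $\lambda \nearrow (1+\nu m)/2$. Since $c_\nu(m) = \frac{m}{2}z(\lambda)$, this yields $c_\nu(m) \to 0$, as claimed.

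The only step that needs any care is upgrading the pointwise monotonicity of Lemma~\ref{l1} (stated for each fixed $c$ on $x\in[0,1]$) to a \emph{global} monotonicity of $\psi$ on the full interval $(0,j_\nu)$; this is the one point where the parameter $c$ must be genuinely varied, and it is the natural main obstacle in an otherwise very short argument.
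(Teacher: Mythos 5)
Your proof is correct and follows exactly the route the paper intends: the paper states this as an unproved corollary of Lemma~\ref{l1}, whose content is precisely that $\lambda=\frac{y'(1)}{y(1)}=\frac12+\frac{m}{2}\frac{zJ_\nu'(z)}{J_\nu(z)}$ with $z=\frac{2}{m}c_\nu(m)$ is a decreasing function of $z$ tending to $\frac{1+\nu m}{2}$ as $z\to 0^+$. Your explicit handling of the passage from the per-$c$ monotonicity on $[0,1]$ to the global monotonicity of $\psi$ on $(0,j_\nu)$ is a worthwhile clarification, but it is the same argument.
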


\subsection{$L_1$ inequalities}

This part devoted to $L_1$ inequalities of Hardy type. We will use these inequalities to obtain $L_p$ inequalities. Note that $L_{1}$ inequalities also are of independent interest. For example,  in \cite{PS}, G.~Psaradakis obtained  sharp homogeneous improvements to $L_1$ weighted Hardy inequalities involving distance from the boundary.

The following assertion holds.

\begin{lem}\label{l3} Suppose that $m>0$,  $\nu\in (0,1/m]$ and let $f$ be an absolutely continuous function in $[0, 1]$ such that $f(0) = 0$ and $f'/x\in L^1[0, 1]$. If $\lambda\in\left[0,\frac{1+\nu m}{2}\right)$, then
\begin{multline}\nonumber
\frac{1-\nu^2 m^2}{4}\int\limits_0^1 \frac{|f(x)|}{x^2} dx +c_{\nu}^2(m)\int\limits_0^1 \frac{|f(x)|}{x^{2-m}} dx \leq\\
 \left(\frac{1+\nu m}{2}-\lambda^2\right) \int\limits_0^1 \frac{|f'(x)|}{x} dx+\left(\lambda^2-\lambda\right)\int\limits_0^1|f'(x)| dx,
\end{multline}
and if $ \lambda\leq 0$, then
$$
\frac{1-\nu^2 m^2}{4}\int\limits_0^1 \frac{|f(x)|}{x^2} dx +c_{\nu}^2(m)\int\limits_0^1 \frac{|f(x)|}{x^{2-m}} dx \leq \int\limits_0^1 \frac{|f'(x)|}{x}\left(\frac{1+\nu m}{2}-\lambda x\right) dx,
$$
where   $c= c_{\nu}(m)$  is a number satisfying the conditions
$$
(1-2\lambda) J_\nu\left(\frac{2}{m}c\right)+2cJ'_\nu\left(\frac{2}{m}c\right) = 0 \quad\text{and}\quad\frac{2}{m}c\in(0,j_{\nu}).
$$
\end{lem}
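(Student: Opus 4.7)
The plan is to combine a Fubini swap with a Riccati transform of (\ref{difur}). Set $y = f_{\nu,m}$, $\phi = y'/y$ on $(0,1]$, and $Q(x) = (1-\nu^2 m^2)/(4x^2) + c^2/x^{2-m}$ with $c = c_\nu(m)$, so the left-hand side of the target inequality is exactly $\int_0^1 Q(x)|f(x)|\,dx$. Differentiating $\phi$ and invoking (\ref{difur}) yields the Riccati identity $\phi' + \phi^2 = -Q$, and consequently the tail antiderivative
$$\Psi(t) := \int_t^1 Q(x)\,dx = \phi(t) - \phi(1) - \int_t^1 \phi^2(x)\,dx.$$
Using Lemma \ref{l1}, the boundary value $y'(1)/y(1) = \tfrac{1}{2} + c J_\nu'(2c/m)/J_\nu(2c/m)$ together with the defining relation for $c_\nu(m)$ gives the clean identity $\phi(1) = \lambda$.

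Next, since $f(0) = 0$ and $f$ is absolutely continuous, $|f(x)| \le \int_0^x |f'(t)|\,dt$. Because $Q \ge 0$ under the hypothesis $\nu m \le 1$, Tonelli's theorem allows the swap
$$\int_0^1 Q(x)|f(x)|\,dx \le \int_0^1 |f'(t)|\,\Psi(t)\,dt,$$
so it suffices to bound $\Psi(t)$ pointwise.

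This is exactly where Lemma \ref{l1} enters: $x\phi(x)$ is decreasing on $(0,1]$ with supremum $(1+\nu m)/2$ as $x\to 0^{+}$ and infimum $\phi(1) = \lambda$, so in particular $\phi(t) \le (1+\nu m)/(2t)$ throughout. When $0 \le \lambda < (1+\nu m)/2$, one additionally has $\phi(x) \ge \lambda/x$, whence $\int_t^1 \phi^2(x)\,dx \ge \lambda^2(1/t - 1)$; substituting into the Riccati expression for $\Psi$ gives
$$\Psi(t) \le \frac{(1+\nu m)/2 - \lambda^2}{t} + (\lambda^2 - \lambda),$$
which produces the first stated inequality. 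When $\lambda \le 0$ the lower bound on $\phi$ is not useful, so the trivial $\int_t^1 \phi^2 \ge 0$ is used instead, yielding $\Psi(t) \le (1+\nu m)/(2t) - \lambda$ and hence the second inequality.

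The main conceptual point is recognizing that the parametric Lamb equation is engineered precisely so that the $\phi^2$ tail in the Riccati expression for $\Psi$ is matched by the lower bound $\lambda^2/x^2$, producing the exact coefficient $(1+\nu m)/2 - \lambda^2$ on the right-hand side. The only technical subtlety is behavior at $x=0$: the hypothesis $f'/x \in L^1[0,1]$ combined with $f(0) = 0$ forces $|f(x)|/x \le \int_0^x |f'(t)|/t\,dt \to 0$, and together with $\Psi(x) = O(1/x)$ this ensures all boundary contributions vanish and the right-hand side controls the left-hand side whenever it is finite.
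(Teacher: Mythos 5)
Your proof is correct and follows essentially the same route as the paper: the Fubini swap after $|f(x)|\le\int_0^x|f'|$, the Riccati identity $\phi'+\phi^2=-Q$ from (\ref{difur}), the monotonicity of $x\phi(x)$ from Lemma \ref{l1} giving both $\phi(t)\le\frac{1+\nu m}{2t}$ and (for $\lambda\ge 0$) $\phi(x)\ge\lambda/x$, and the parametric Lamb equation pinning $\phi(1)=\lambda$. The paper phrases the case split as ``$xy'/y\ge0$ on $[0,1]$'' versus ``$y'$ vanishes at some $x_0$'' rather than by the sign of $\lambda$, but these are the same dichotomy, so there is nothing substantive to add.
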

\begin{proof} Using the inequality $|f(x)|\leq \int_0^x|f'(t)|dt$ and changing the order of integration in a multiple integral, we obtain
$$
\int\limits_0^1 \frac{|f(x)|}{x^2} \left( \frac{1-\nu^2 m^2}{4}+c^2_\nu(m)x^{m} \right) dx \leq \int\limits_0^1 |f'(t)| T(t) dt,
$$
where
$$
T(t)  = \int\limits_t^1\left(\frac{1-\nu^2m^2}{4x^2}+\frac{c^2_\nu(m)}{x^{2-m}}\right) dx.
$$
By  differential equation  (\ref{difur}) we get
$$
T(t)  = -\int\limits_t^1 \frac{y''(x)}{y(x)} dx= -\int\limits_t^1 \left(\frac{y'(x)}{y(x)}\right)'+ \left(\frac{y'(x)}{y(x)}\right)^2 dx,
$$
since
$$
\frac{d}{dt}\left(\frac{y'(x)}{y(x)}\right)=\frac{y''(x)}{y(x)}-\left(\frac{y'(x)}{y(x)}\right)^2.
$$
Let us consider two cases:

\textbf{Case 1:}  $xy'(x)/y(x)\geq0, x\in [0,1]$.

Since $xy'(x)/y(x)$ is decreasing, it is easy to see that
$$
\min_{x\in[0,1]}\left(x\frac{y'(x)}{y(x)}\right)^2 = \left(\frac{y'(1)}{y(1)}\right)^2\geq 0
$$
and
$$
T(t)  =  -\int\limits_t^1 \left(\frac{y'(x)}{y(x)}\right)'+ \left(\frac{y'(x)}{y(x)}\right)^2 dx \leq \frac{y'(t)}{y(t)}-\frac{y'(1)}{y(1)}-\left(\frac{y'(1)}{y(1)}\right)^2\int\limits_t^1\frac{dx}{x^2}=
$$
$$
=\frac{y'(t)}{y(t)}-\frac{y'(1)}{y(1)}+\left(\frac{y'(1)}{y(1)}\right)^2-\left(\frac{y'(1)}{y(1)}\right)^2\frac{1}{t}.
$$
By Lemma \ref{l1} so that
$$
T(t) \leq \frac{1}{t}\sup_{t\in[0,1]}\frac{ty'(t)}{y(t)}-\frac{y'(1)}{y(1)}+\left(\frac{y'(1)}{y(1)}\right)^2-\left(\frac{y'(1)}{y(1)}\right)^2\frac{1}{t}=
$$
$$
=\left(\frac{1+\nu m}{2}-\left(\frac{y'(1)}{y(1)}\right)^2\right)\frac{1}{t}+\left(\frac{y'(1)}{y(1)}\right)^2-\frac{y'(1)}{y(1)}.
$$
Remark that we choose $c = c_\nu(m)$ as the a number satisfying the conditions
$$
\frac{y'(1)}{y(1)} = \frac{1}{2}+c\frac{J'_\nu\left(\frac{2}{m}c\right)}{J_\nu\left(\frac{2}{m}c\right)} = \lambda.
$$
Therefore,  $\frac{1+\nu m}{2}-\lambda^2\geq 0$, $\lambda^2-\lambda \leq 0$ and
$$
\int\limits_0^1 \frac{|f(x)|}{x^2} \left( \frac{1-\nu^2 m^2}{4}+c_\nu^2(m)x^{m} \right) dx \leq
$$
$$
\leq\left(\frac{1+\nu m}{2}-\lambda^2\right)\int\limits_0^1 \frac{|f'(t)|}{t} dt+\left(\lambda^2-\lambda\right)\int\limits_0^1|f'(t)| dt.
$$
\textbf{Case 2:} Exist a number $x_0$ such that $y'(x_0)=0$.
In this case, since $xy'(x)/y(x)$ is decreasing, we have that
$$
\min_{x\in[0,1]}\left(\frac{xy'(x)}{y(x)}\right)^2 =0
$$
and
$$
T(t)  = -\int\limits_t^1 \left(\frac{y'(x)}{y(x)}\right)'+ \left(\frac{y'(x)}{y(x)}\right)^2 dx \leq \frac{y'(t)}{y(t)}-\frac{y'(1)}{y(1)}.
$$
As before,
$$
\int\limits_0^1 \frac{|f(x)|}{x^2} \left( \frac{1-\nu^2 m^2}{4}+c_\nu^2(m)x^{m} \right) dx\leq \int\limits_0^1 \frac{|f'(t)|}{t}\left(\frac{1+\nu m}{2}-\lambda t\right) dt.
$$
This completes the proof of Lemma \ref{l3}.
\end{proof}
Note that in  \cite{AW_ZAMM} the authors considered the condition $\lambda = 0$. In this case the application of Lemma \ref{l3} yields the following corollary.
\begin{cor}
Suppose that $m>0$, and let $f$ be an absolutely continuous function in $[0, 1]$ such that $f(0) = 0$ and $f'/x\in L^1[0, 1]$. If  $\nu\in (0,1/m]$, then
$$
\frac{1-\nu^2 m^2}{4}\int\limits_0^1 \frac{|f(x)|}{x^2} dx +c^2_\nu(m)\int\limits_0^1 \frac{|f(x)|}{x^{2-m}} dx \leq \frac{1+\nu m}{2} \int\limits_0^1 \frac{|f'(x)|}{x} dx,
$$
where   $c= c(\nu,m)$  is a number satisfying the conditions
$$
J_\nu\left(\frac{2}{m}c\right)+2c J'_\nu\left(\frac{2}{m}c\right) = 0 \quad\text{and}\quad \frac{2}{m}c\in(0,j_{\nu}).
$$
\end{cor}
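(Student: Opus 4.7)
The plan is to derive this corollary as an immediate specialization of Lemma \ref{l3} by setting $\lambda = 0$; no new argument is required beyond checking that the substitution is legitimate and simplifies cleanly.

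First I would verify that the defining conditions on $c$ match. Substituting $\lambda = 0$ into the requirement $(1-2\lambda)J_\nu(2c/m) + 2cJ'_\nu(2c/m) = 0$ gives exactly $J_\nu(2c/m) + 2cJ'_\nu(2c/m) = 0$, while the constraint $2c/m \in (0, j_\nu)$ is preserved verbatim. Thus the quantity $c = c(\nu, m)$ displayed in the corollary coincides with $c_\nu(m)$ from Lemma \ref{l3} specialized at $\lambda = 0$.

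Next I would substitute $\lambda = 0$ into the first of the two inequalities in Lemma \ref{l3}, which is applicable since $0$ lies in the half-open interval $\left[0, \frac{1+\nu m}{2}\right)$ for every admissible pair $(m,\nu)$ with $m > 0$ and $\nu \in (0,1/m]$. The coefficient $\frac{1+\nu m}{2} - \lambda^2$ reduces to $\frac{1+\nu m}{2}$, while the coefficient $\lambda^2 - \lambda$ of the second right-hand term vanishes. The left-hand side is independent of $\lambda$ and therefore unchanged. What remains is precisely the desired inequality
$$
\frac{1-\nu^2 m^2}{4}\int\limits_0^1 \frac{|f(x)|}{x^2}\,dx + c_\nu^2(m)\int\limits_0^1 \frac{|f(x)|}{x^{2-m}}\,dx \leq \frac{1+\nu m}{2}\int\limits_0^1 \frac{|f'(x)|}{x}\,dx.
$$

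There is no genuine obstacle here; the only subtlety worth recording is that the value $\lambda = 0$ can equivalently be accessed through the second inequality of Lemma \ref{l3}, valid for $\lambda \leq 0$, in which the bracketed integrand $\frac{1+\nu m}{2} - \lambda x$ collapses to the constant $\frac{1+\nu m}{2}$. Both branches therefore agree at the boundary value $\lambda = 0$ and yield the stated inequality, so the corollary follows with nothing more than a direct substitution.
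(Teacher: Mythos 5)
Your proposal is correct and coincides with the paper's own derivation: the text introduces this corollary precisely as the specialization of Lemma \ref{l3} to $\lambda=0$, under which the defining equation for $c$ reduces to $J_\nu(2c/m)+2cJ'_\nu(2c/m)=0$ and the term with coefficient $\lambda^2-\lambda$ drops out. Nothing further is needed.
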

\begin{cor}\label{cor2}
 Suppose that $m>1$, and let $f$ be an absolutely continuous function in $[0, 1]$ such that $f(0) = 0$ and $f'/x\in L^1[0, 1]$. If  $\nu\in (0,1/m]$ and $\lambda\in\left[0,\frac{1+\nu m}{2}\right)$, then
\begin{multline}\nonumber
\frac{1-\nu^2 m^2}{4}\int\limits_0^1 \frac{|f(x)|}{x^2} dx +\left(c^2_\nu(m)+(m-1)(\lambda-\lambda^2)\right)\int\limits_0^1 \frac{|f(x)|}{x^{2-m}} dx \leq\\
 \left(\frac{1+\nu m}{2}-\lambda^2\right) \int\limits_0^1 \frac{|f'(x)|}{x} dx,
\end{multline}
where   $c= c_\nu(m)$  is a number satisfying the conditions
$$
(1-2\lambda)J_\nu\left(\frac{2}{m}c\right)+2cJ'_\nu\left(\frac{2}{m}c\right) = 0 \quad\text{and}\quad\frac{2}{m}c\in(0,j_{\nu}).
$$
\end{cor}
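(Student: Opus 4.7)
The plan is to derive this corollary from Lemma \ref{l3} by absorbing the extra term $(\lambda^2-\lambda)\int_0^1|f'(x)|\,dx$ that appears on the right-hand side of Lemma \ref{l3} into the $x^{2-m}$-weighted integral on the left-hand side, at the cost of producing the extra summand $(m-1)(\lambda-\lambda^2)$ next to $c_\nu^2(m)$.

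First I would observe that the hypothesis $\nu\in(0,1/m]$ implies $\nu m\in(0,1]$, so that $\tfrac{1+\nu m}{2}\in(\tfrac12,1]$ and consequently $\lambda\in[0,\tfrac{1+\nu m}{2})\subseteq[0,1]$. In particular $\lambda-\lambda^2\geq0$, which is the crucial sign condition that will allow the absorption to work in the correct direction.

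Next, the main technical step is to establish the auxiliary one-dimensional Hardy-type bound
\begin{equation*}
(m-1)\int_0^1\frac{|f(x)|}{x^{2-m}}\,dx\leq\int_0^1|f'(x)|\,dx,
\end{equation*}
valid for $m>1$ and any absolutely continuous $f$ with $f(0)=0$. I would prove this by writing $|f(x)|\leq\int_0^x|f'(t)|\,dt$, substituting, switching the order of integration via Fubini, and computing the inner integral $\int_t^1 x^{m-2}\,dx=(1-t^{m-1})/(m-1)$; finally estimating $1-t^{m-1}\leq 1$ on $[0,1]$ yields the claimed bound. This is the standard trick and presents no real difficulty.

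Finally I would invoke Lemma \ref{l3} with the same choice of $c_\nu(m)$ and $\lambda$, move the $(\lambda^2-\lambda)\int_0^1|f'|\,dx$ term (which is nonpositive) to the left-hand side so that it appears as $(\lambda-\lambda^2)\int_0^1|f'|\,dx\geq 0$, and then estimate this quantity from below by $(m-1)(\lambda-\lambda^2)\int_0^1|f(x)|/x^{2-m}\,dx$ using the auxiliary Hardy inequality multiplied by the nonnegative factor $\lambda-\lambda^2$. Collecting the two $x^{2-m}$-weighted terms produces exactly the coefficient $c_\nu^2(m)+(m-1)(\lambda-\lambda^2)$ claimed in the corollary. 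There is essentially no obstacle here; the only point that needs care is the sign check $\lambda-\lambda^2\geq0$, which depends on the range of $\lambda$ being contained in $[0,1]$, itself a consequence of $\nu m\leq 1$.
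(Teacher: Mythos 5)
Your proposal is correct and follows essentially the same route as the paper: the paper's proof likewise combines Lemma \ref{l3} with the auxiliary bound $(m-1)\int_0^1 |f(x)|\,x^{m-2}\,dx \le \int_0^1|f'(x)|\,dx$, absorbing the nonpositive remainder term using $\lambda-\lambda^2\ge 0$. The only difference is that the paper cites this auxiliary inequality from \cite{Avkh_Nas_SibMJ_2014} instead of reproving it by Fubini as you do.
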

\begin{proof} It is known  \cite{Avkh_Nas_SibMJ_2014} that if $m>1$ and $f$ is an absolutely continuous function in $[0, 1]$ such that $f(0) = 0$, then the following sharp inequality
 $$
(m-1)\int\limits_{0}^1 \frac{|f(x)|}{x^{2-m}} dx < \int\limits_{0}^1|f'(x)|dx
$$
is valid. Using this inequality and Lemma \ref{l3}, we get Corollary \ref{cor2}.
\end{proof}
The following theorem is extension of Lemma \ref{l3} for arbitrary segment cases.
\begin{thm}\label{th1}
 Suppose that $m>0$, $\nu\in (0,1/m]$, and let $f$ be an absolutely continuous function in $[a, b]$ such that $f(a)=f(b) = 0$ and $f'/x\in L^1[a, b]$. If  $\lambda\in\left[0,\frac{1+\nu m}{2}\right)$, then
\begin{multline}\nonumber
\frac{1-\nu^2 m^2}{4}\int\limits_a^b \frac{|f(x)|}{\delta(x)^2} dx +\frac{c_\nu^2(m)}{\delta_0^m}\int\limits_a^b \frac{|f(x)|}{\delta(x)^{2-m}} dx \leq\\
\left(\frac{1+\nu m}{2}-\lambda^2\right) \int\limits_a^b \frac{|f'(x)|}{\delta(x)} dx+\frac{\lambda^2-\lambda}{\delta_0}\int\limits_a^b|f'(x)| dx,
\end{multline}
and if $ \lambda\leq 0$, then
\begin{multline}\nonumber
\frac{1-\nu^2 m^2}{4}\int\limits_a^b \frac{|f(x)|}{\delta(x)^2} dx +\frac{c_\nu^2(m)}{\delta_0^m}\int\limits_a^b \frac{|f(x)|}{\delta(x)^{2-m}} dx \leq\\
 \frac{1+\nu m}{2}\int\limits_a^b \frac{|f'(x)|}{\delta(x)}dx-\frac{\lambda}{\delta_0}\int\limits_a^b |f'(x)|dx,
\end{multline}
where $\delta(x) = \min\{b-x,x-a\}$, $\delta_0= \frac{b-a}{2}$ and the constant  $c_\nu(m)$  is a number satisfying the conditions
$$
(1-2\lambda)J_\nu\left(\frac{2}{m}c_\nu(m)\right)+2c_\nu(m)J'_\nu\left(\frac{2}{m}c_\nu(m)\right)=0\quad\text{and}\quad\frac{2}{m}c_\nu(m)\in(0,j_{\nu}).
$$
\end{thm}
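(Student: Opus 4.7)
The plan is to reduce Theorem \ref{th1} to Lemma \ref{l3} by splitting the interval $[a,b]$ at its midpoint and rescaling each half to $[0,1]$. Because $f(a)=f(b)=0$, the distance function $\delta(x)=\min\{x-a,b-x\}$ splits naturally: $\delta(x)=x-a$ on $[a,(a+b)/2]$ and $\delta(x)=b-x$ on $[(a+b)/2,b]$, and in both cases $\delta$ vanishes at the endpoint where $f$ does. This lets me handle the two halves separately by the same one-dimensional inequality and then add.

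On the left half, I would introduce $t=(x-a)/\delta_0$ (with $\delta_0=(b-a)/2$) and set $g(t)=f(a+t\delta_0)$, so that $g(0)=0$, $g$ is absolutely continuous on $[0,1]$, $g'(t)=\delta_0 f'(a+t\delta_0)$, and $\delta(x)=t\delta_0$. The change of variables converts each integral into a scalar multiple of its $[0,1]$-analogue: for instance
\[
\int_a^{(a+b)/2}\frac{|f(x)|}{\delta(x)^{2}}\,dx=\frac{1}{\delta_0}\int_0^1\frac{|g(t)|}{t^{2}}\,dt,\qquad \int_a^{(a+b)/2}\frac{|f(x)|}{\delta(x)^{2-m}}\,dx=\delta_0^{\,1-m}\int_0^1\frac{|g(t)|}{t^{2-m}}\,dt,
\]
and similarly $\int\frac{|f'|}{\delta}dx=\delta_0^{-1}\int\frac{|g'|}{t}dt$ and $\int|f'|dx=\int|g'|dt$. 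Plugging these into Lemma \ref{l3} applied to $g$, and dividing through by $\delta_0$, produces exactly the desired inequality with all integrals restricted to $[a,(a+b)/2]$; the factor $1/\delta_0^{m}$ in front of the Bessel-weighted term and the factor $1/\delta_0$ multiplying $\lambda^2-\lambda$ (or $-\lambda$) fall out of the powers of $\delta_0$ arising in the change of variables.

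On the right half I would use the symmetric substitution $t=(b-x)/\delta_0$ and $g(t)=f(b-t\delta_0)$; the Jacobian is the same in absolute value, $g(0)=f(b)=0$, and Lemma \ref{l3} applies identically, yielding the analogous inequality on $[(a+b)/2,b]$. Adding the two halves gives Theorem \ref{th1} in both the $\lambda\in[0,(1+\nu m)/2)$ case and the $\lambda\le 0$ case (in the latter one just keeps the pointwise form $(1+\nu m)/2-\lambda t$ inside the integral before splitting $-\lambda t$ off).

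No real obstacle is expected: the argument is a routine scaling plus a symmetry reflection, and the only thing to watch is bookkeeping of the $\delta_0$ exponents in front of each term so that the final statement has $1/\delta_0^{m}$ on the Bessel-weighted Hardy integral and $1/\delta_0$ on the unweighted $|f'|$ integral. The mild subtlety worth flagging is that Lemma \ref{l3} requires $g'/t\in L^1[0,1]$, which transfers from the hypothesis $f'/\delta\in L^1[a,b]$ via the same change of variables, so the integrability assumptions match on each half.
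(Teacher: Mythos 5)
Your proposal is correct and takes essentially the same route as the paper: the paper likewise transfers Lemma \ref{l3} to an interval of length $\rho=\delta_0$ by the change of variable $x=\rho\tau$, applies it to $f(\tau+a)$ and $f(b-\tau)$ on the two halves of $[a,b]$, and sums the resulting inequalities. The only slip is the displayed scaling factor for the Bessel-weighted term, which should be $\delta_0^{\,m-1}$ rather than $\delta_0^{\,1-m}$; with that correction the bookkeeping gives precisely the $\delta_0^{-m}$ coefficient in the statement, as you assert.
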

\begin{proof}For an arbitrary $\rho> 0$, the change of variable $x = \rho \tau$ in the first inequality in Lemma \ref{l3} leads us to
\begin{multline}\nonumber
\frac{1-\nu^2 m^2}{4}\int\limits_0^\rho \frac{|f(x)|}{x^2} dx +\frac{c_\nu^2(m)}{\rho^m}\int\limits_0^\rho \frac{|f(x)|}{x^{2-m}} dx \leq\\
 \left(\frac{1+\nu m}{2}-\lambda^2\right) \int\limits_0^\rho \frac{|f'(x)|}{x} dx+\frac{\lambda^2-\lambda}{\rho}\int\limits_0^\rho|f'(x)| dx.
\end{multline}
Applying the latter inequality to the functions $f(\tau) = g(\tau + a)$ and  $u(\tau) = f(b- \tau)$ with
$ \rho= \delta_0 = (b-a)/2$, we get
\begin{multline}\nonumber
\frac{1-\nu^2 m^2}{4}\int\limits_a^{(a+b)/2} \frac{|f(x)|}{(x-a)^2} dx +\frac{c_\nu^2(m)}{\delta_0^m}\int\limits_a^{(a+b)/2} \frac{|f(x)|}{(x-a)^{2-m}} dx \leq\\
 \left(\frac{1+\nu m}{2}-\lambda^2\right) \int\limits_a^{(a+b)/2} \frac{|f'(x)|}{x-a} dx+\frac{\lambda^2-\lambda}{\delta_0}\int\limits_a^{(a+b)/2}|f'(x)| dx,
\end{multline}
and
\begin{multline}\nonumber
\frac{1-\nu^2 m^2}{4}\int\limits^b_{(a+b)/2} \frac{|f(x)|}{(b-x)^2} dx +\frac{c_\nu^2(m)}{\delta_0^m}\int\limits^b_{(a+b)/2} \frac{|f(x)|}{(b-x)^{2-m}} dx \leq\\
 \left(\frac{1+\nu m}{2}-\lambda^2\right) \int\limits^b_{(a+b)/2} \frac{|f'(x)|}{b-x} dx+\frac{\lambda^2-\lambda}{\delta_0}\int\limits^b_{(a+b)/2}|f'(x)| dx.
\end{multline}
The sum of these two inequalities gives the required statement. The second inequality of Theorem \ref{th1} is proved in the same way.

 This completes the proof of Theorem \ref{th1}.
\end{proof}
We will give some examples.

\textbf{Example 1.} If $a=-1, b=1, \nu = 1, m=1$ and $\lambda =\frac{1}{2}$, we get
$$
\frac{j_1'^2}{3}\int\limits_{-1}^1 \frac{|f(x)|}{1-|x|} dx \leq \int\limits_{-1}^1 \frac{|f'(x)|}{1-|x|} dx-\frac{1}{3}\int\limits_{-1}^1|f'(x)| dx,
$$
where ${j'}_1$ is the first positive zero of the derivative $J'_1$ of Bessel's function $J_1$ and it is known that $j'_1 \approx 1,8412$ (see \cite[p. 411, t. 95]{Table}).

For comparison, we give the following sharp inequality from \cite{Avkh_Nas_SibMJ_2014}.
$$
e\int\limits_{-1}^1 \frac{|f(x)|}{1-|x|} dx \leq \int\limits_{-1}^1 \frac{|f'(x)|}{1-|x|} dx.
$$

\textbf{Example 2.} If $a=-1, b=1, \nu = 1$ and $m=1$, then
$$
\frac{c^2}{1-\lambda^2}\int\limits_{-1}^{1} \frac{|f(x)|}{1-|x|} dx \leq\int\limits_{-1}^1 \frac{|f'(x)|}{1-|x|} dx+\frac{\lambda^2-\lambda}{1-\lambda^2}\int\limits_{-1}^1|f'(x)| dx,
$$
where $c$  is a number satisfying the conditions
$$
\frac{cJ_0(2c)}{J_1(2c)}= \lambda, \quad 2c\in(0,j_{\nu}) \quad\text{and}\quad  c\to 0 \text{  as  } \lambda\to 1.
$$
In the limit when $\lambda \to 1$, since
$$
\lim_{c_\nu\to 0}\frac{c^2}{1-c^2\frac{J_{0}^2(2c)}{J_1^2(2c)}}=1 \quad\text{and}\quad \lim_{\lambda\to 1}\frac{\lambda^2-\lambda}{1-\lambda^2}=-\frac{1}{2},
$$
the last inequality implies
$$
\int\limits_{-1}^1 \frac{|f(x)|}{1-|x|} dx \leq \int\limits_{-1}^1 \frac{|f'(x)|}{1-|x|} dx-\frac{1}{2}\int\limits_{-1}^1|f'(x)| dx.
$$

\textbf{Example 3.} If $a=-1, b=1, \nu = 0, m=1$ and $\lambda \to 0.5$, we get
$$
\int\limits_{-1}^1 \frac{|f(x)|}{(1-|x|)^2} dx \leq \int\limits_{-1}^1 \frac{|f'(x)|}{1-|x|} dx-\int\limits_{-1}^1|f'(x)| dx.
$$
Above we use that $\lim_{\lambda\to 1/2}c_0(1)=0$.

For comparison, we give the following sharp inequality from \cite{Av4} (see also \cite{Nas_LJM_2019}, Corollary 1.).
$$
\int\limits_{-1}^1 \frac{|f(x)|}{(1-|x|)^2} dx \leq \int\limits_{-1}^1 \frac{|f'(x)|}{1-|x|} dx-\int\limits_{-1}^1|f'(x)|(1-|x|) dx.
$$

\subsection{$L_p$ inequalities.} In this section we establish $L_p$ inequalities. We consider inequalities for $p\geq 1$ and $p=2$. To prove $L_p$ inequalities for $p>1$ we will use H\"{o}lder's inequality and the following inequalities of Opial type. For more information about Opial's inequalities we refer to \cite{Shum}. If $f$ is absolutely continuous on $(0,\rho)$ with $f(0) = 0$, then the following Opial inequalities hold:
\begin{equation}\label{opil_in_1}
\int\limits_0^\rho \frac{|f(x)|f'(x)|}{x}dx\leq 2\int\limits_0^\rho |f'(x)|^{2}dx
\end{equation}
and
\begin{equation}\label{opil_in_2}
\int\limits_0^\rho |f(x)||f'(x)|dx\leq \frac{\rho}{2}\int\limits_0^\rho |f'(x)|^{2}dx.
\end{equation}
The following theorem holds.
\begin{thm}  \label{th2}Suppose that $p\geq 1, r\in [1,p]$ and $f$ be an absolutely continuous function in $[a, b]$ such that $f(a)=f(b) = 0$. If $m>1, \nu\in [0,1/m]$ and $\lambda\in\left[0,\frac{1+\nu m}{2}\right)$, then
\begin{multline}\nonumber
(1-r\nu^2 m^2)\int\limits_a^b \frac{|g(x)|^p}{\delta(x)^2} dx +\frac{4r\left(c_\nu^2(m)+(m-1)(\lambda -\lambda ^2)\right)}{\delta_0^m}\int\limits_a^b \frac{|g(x)|^p}{\delta(x)^{2-m}} dx \leq \\
p^r\left(2(1+\nu m)-4\lambda^2\right)^r \int\limits_a^b \frac{|g(x)|^{p-r}|g'(x)|^{r}}{\delta(x)^{2-r}} dx
\end{multline}
and if $m>1, \nu\in [0,1/m)$ and $\lambda\in\left[0,\frac{1+\nu m}{2}\right)$, then
\begin{multline}\nonumber
\int\limits_a^b \frac{|g(x)|^p}{\delta(x)^2} dx +\frac{4r\left(c_\nu^2(m)+(m-1)(\lambda -\lambda^2)\right)}{(1-\nu^2 m^2)\delta_0^m}\int\limits_a^b \frac{|g(x)|^p}{\delta(x)^{2-m}} dx \leq \\
 p^r \left(\frac{2}{1-\nu m}-\frac{4\lambda^2}{1-\nu^2m^2}\right)^r\int\limits_a^b \frac{|g(x)|^{p-r}|g'(x)|^{r}}{\delta(x)^{2-r}} dx,
\end{multline}
where $\delta(x) = \min\{b-x,x-a\}$, $\delta_0= \frac{b-a}{2}$ and the constant  $c_\nu(m)$  is a number satisfying the conditions
$$
(1-2\lambda)J_\nu\left(\frac{2}{m}c_\nu(m)\right)+c_\nu(m)J'_\nu\left(\frac{2}{m}c_\nu(m)\right) = 0\quad\text{and}\quad\frac{2}{m}c_\nu(m)\in(0,j_{\nu}).
$$
\end{thm}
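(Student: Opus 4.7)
The plan is to start from an $[a,b]$-version of Corollary \ref{cor2} and then use a H\"older--Young interpolation to boost it to the full $L_p$ form. First I would extend Corollary \ref{cor2} from $[0,1]$ to an arbitrary segment $[a,b]$, repeating verbatim the scaling $x = \delta_0 \tau$ and left/right symmetrization from the proof of Theorem \ref{th1}, which yields
$$
\frac{1-\nu^2 m^2}{4}\int_a^b \frac{|f(x)|}{\delta(x)^2}\,dx + \frac{c_\nu^2(m)+(m-1)(\lambda-\lambda^2)}{\delta_0^m}\int_a^b \frac{|f(x)|}{\delta(x)^{2-m}}\,dx \leq \left(\frac{1+\nu m}{2}-\lambda^2\right)\int_a^b \frac{|f'(x)|}{\delta(x)}\,dx.
$$
Applying this to $f = |g|^p$, whose derivative satisfies $|f'| = p|g|^{p-1}|g'|$ a.e., and multiplying through by $4$, I obtain the building block
$$
(1-\nu^2 m^2)\,I_1 + \frac{4C}{\delta_0^m}\,I_2 \leq A\int_a^b \frac{|g|^{p-1}|g'|}{\delta}\,dx,
$$
where I set $A = p(2(1+\nu m)-4\lambda^2)$, $C = c_\nu^2(m)+(m-1)(\lambda-\lambda^2)$, $I_1 = \int_a^b |g|^p/\delta^2\,dx$, $I_2 = \int_a^b |g|^p/\delta^{2-m}\,dx$, and $J = \int_a^b |g|^{p-r}|g'|^r/\delta^{2-r}\,dx$.

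The key step is to multiply this building block by $r$ and interpolate the right-hand side. H\"older's inequality with conjugate exponents $r$ and $r/(r-1)$ (trivial for $r=1$) gives
$$
\int_a^b \frac{|g|^{p-1}|g'|}{\delta}\,dx = \int_a^b \left(\frac{|g|^{p-r}|g'|^r}{\delta^{2-r}}\right)^{1/r}\left(\frac{|g|^p}{\delta^2}\right)^{(r-1)/r}\,dx \leq J^{1/r}\,I_1^{(r-1)/r},
$$
after which Young's inequality with the scaling constant $c = r^{(r-1)/r}$ yields $rA\,J^{1/r}I_1^{(r-1)/r} \leq A^r J + (r-1)I_1$. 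Combining these estimates with the multiplied building block and using the arithmetic identity $r(1-\nu^2 m^2) - (r-1) = 1 - r\nu^2 m^2$ produces
$$
(1 - r\nu^2 m^2)\,I_1 + \frac{4rC}{\delta_0^m}\,I_2 \leq A^r J,
$$
which is exactly the first inequality of the theorem.

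The second inequality follows by the same machinery after first dividing the building block by the strictly positive quantity $1-\nu^2 m^2$ (which is allowed under the stricter assumption $\nu \in [0,1/m)$), using the identity
$$
\frac{2(1+\nu m)-4\lambda^2}{1-\nu^2 m^2} = \frac{2}{1-\nu m} - \frac{4\lambda^2}{1-\nu^2 m^2},
$$
and then repeating the multiply-by-$r$, H\"older, Young chain verbatim. The one delicate point of the argument — and what I expect to be the main obstacle for anyone trying a naive scaling — is the precise choice of the Young constant $c = r^{(r-1)/r}$: it is tuned so that the spurious $(r-1)I_1$ term generated by the initial multiplication by $r$ is cancelled exactly by the $(r-1)I_1$ term coming out of Young, leaving the advertised coefficient $(1-r\nu^2 m^2)$ (respectively $1$) in front of $I_1$ and no extra factor on $J$.
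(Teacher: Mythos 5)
Your proof is correct and follows essentially the same route as the paper: both start from Corollary \ref{cor2} applied to $f=|g|^p$ (combined with the scaling/symmetrization that passes from $[0,1]$ to $[a,b]$) and then interpolate between $\int |g|^p/\delta^2$ and $\int |g|^{p-r}|g'|^r/\delta^{2-r}$ with weights $1-1/r$ and $1/r$. The only cosmetic difference is that the paper performs this interpolation pointwise on the integrands via the convex inequality (\ref{el_ineq}), whereas you do it at the level of the integrals via H\"older followed by Young; the constants that come out are identical.
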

\begin{proof} Let $\rho>0$, and let a function $g\in C^1[0,\rho]$. Then $f(x)=|g(x)|^p$ belongs to $C^1[0,\rho]$, because $$
\frac{d}{dx}|g(x)|^p = p|g(x)|^{p-1}g'(x)\textrm{sign} g(x)
$$
and the function $|g(x)|^{p-1}\textrm{sign} g(x)$ is continuous for $p>1$.

The application of Corollary \ref{cor2} to  the function $f(x)=|g(x)|^p\in C^1[0,\rho]$ yields
\begin{multline}\nonumber
(1-\nu^2 m^2)\int\limits_0^1 \frac{|g(x)|^p}{x^2} dx +4\left(c_\nu^2(m)+(m-1)(\lambda -\lambda ^2)\right)\int\limits_0^1 \frac{|g(x)|^p}{x^{2-m}} dx \leq\\
4p\left(\frac{1+\nu m}{2}-\lambda^2\right) \int\limits_0^1 \frac{|g'(x)||g(x)|^{p-1}}{x} dx.
\end{multline}
Using the convex inequality(see \cite[p. 37]{HLP})
\begin{equation}\label{el_ineq}
a^{p_1}b^{p_2}\leq \left(\frac{p_1a+p_2b}{p_1+p_2}\right)^{p_1+p_2}
\end{equation}
to the quantities
$$
a = \frac{|g(x)|^p}{x^2}, b = p^r\left(2(1+\nu m)-4\lambda^2\right)^r \frac{|g(x)|^{p-r}|g'(x)|^{r}}{x^{2-r}}, p_1 = 1-\frac{1}{r}, p_2  = \frac{1}{r},
$$
we get
\begin{multline}\nonumber
(1-r\nu^2 m^2)\int\limits_0^1 \frac{|g(x)|^p}{x^2} dx +4r\left(c_\nu^2(m)+(m-1)(\lambda -\lambda ^2)\right)\int\limits_0^1 \frac{|g(x)|^p}{x^{2-m}} dx \leq\\
 p^r\left(2(1+\nu m)-4\lambda^2\right)^r \int\limits_0^1 \frac{|g(x)|^{p-r}|g'(x)|^{r}}{x^{2-r}} dx.
\end{multline}
Taking into account  (\ref{el_ineq}) to the quantities
$$
a = \frac{|g(x)|^p}{x^2}, b = p^r\left(\frac{2}{1-\nu m}-\frac{4\lambda^2}{1-\nu^2m^2}\right)^r\frac{|g(x)|^{p-r}|g'(x)|^{r}}{x^{2-r}}, p_1 = 1-\frac{1}{r}, p_2  = \frac{1}{r},
$$
we get
\begin{multline}\nonumber
\int\limits_0^1 \frac{|g(x)|^p}{x^2} dx +\frac{4r\left(c_\nu^2(m)+(m-1)(\lambda -\lambda ^2)\right)}{1-\nu^2 m^2}\int\limits_0^1 \frac{|g(x)|^p}{x^{2-m}} dx \leq\\
p^r \left(\frac{2}{1-\nu m}-\frac{4\lambda^2}{1-\nu^2m^2}\right)^r\int\limits_0^1 \frac{|g(x)|^{p-r}|g'(x)|^{r}}{x^{2-r}} dx.
\end{multline}
Hence, we have inequalities on $[0,1]$. As before, to get inequalities in arbitrary segment $[a,b]$ we use change of variables.

This concludes proof of Theorem \ref{th2}.
\end{proof}
If we put that in Theorem \ref{th2} $r=p$ and
$$
2(1+\nu m)-4\lambda^2 \leq 1, \frac{2}{1-\nu m}-\frac{4\lambda^2}{1-\nu^2m^2}=1
$$
 we obtain the following assertion.

\begin{cor} Suppose that $ p\geq 1$, $m>1$, $\nu\in [0,1/m]$, and $f$ be an absolutely continuous function in $[a, b]$ such that $f(a)=f(b) = 0$. If $\lambda\in\left[\frac{\sqrt{1+2\nu m}}{4},\frac{1+\nu m}{2}\right)$, then
\begin{multline}\nonumber
 \left(c_\nu^2(m)+(m-1)(\lambda -\lambda ^2)\right)\frac{4p}{\delta_0^m}\int\limits_a^b \frac{|g(x)|^p}{\delta(x)^{2-m}} dx\\
 \leq p^p \int\limits_a^b \frac{|g'(x)|^{p}}{\delta(x)^{2-p}} dx-(1-p\nu^2 m^2)\int\limits_a^b \frac{|g(x)|^p}{\delta(x)^2} dx
\end{multline}
and if  $\lambda=\frac{1+\nu m}{2}$, then
$$
\int\limits_a^b \frac{|g(x)|^p}{\delta(x)^2} dx +\frac{p\left(m-1\right)}{\delta_0^m}\int\limits_a^b \frac{|g(x)|^p}{\delta(x)^{2-m}} dx
\leq p^p \int\limits_a^b \frac{|g'(x)|^{p}}{\delta(x)^{2-p}} dx,
$$
where $\delta(x) = \min\{b-x,x-a\}$, $\delta_0= \frac{b-a}{2}$ and the constant  $c_\nu(m)$  is a number satisfying the conditions
$$
(1-2\lambda)J_\nu\left(\frac{2}{m}c_\nu(m)\right)2+c_\nu(m)J'_\nu\left(\frac{2}{m}c_\nu(m)\right) = 0\quad\text{and}\quad\frac{2}{m}c_\nu(m)\in(0,j_{\nu}).
$$
\end{cor}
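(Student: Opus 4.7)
The plan is to specialize Theorem~\ref{th2} to the diagonal case $r = p$. When $r = p$ the factor $|g(x)|^{p-r}$ on the right-hand side reduces to $1$, so the right-hand side collapses to the clean integral $p^p\,[\,\cdots\,]^p\int_a^b |g'(x)|^p/\delta(x)^{2-p}\,dx$, which is exactly the quantity appearing in the target conclusion.

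For the first conclusion, I would invoke the first inequality of Theorem~\ref{th2} with $r = p$, giving
\begin{multline*}
(1 - p\nu^2 m^2) \int_a^b \frac{|g(x)|^p}{\delta(x)^2}\,dx + \frac{4p\bigl(c_\nu^2(m) + (m-1)(\lambda - \lambda^2)\bigr)}{\delta_0^m} \int_a^b \frac{|g(x)|^p}{\delta(x)^{2-m}}\,dx \\
\leq p^p \bigl(2(1+\nu m) - 4\lambda^2\bigr)^p \int_a^b \frac{|g'(x)|^p}{\delta(x)^{2-p}}\,dx.
\end{multline*}
The lower bound imposed on $\lambda$ is engineered precisely so that $2(1+\nu m) - 4\lambda^2 \leq 1$, hence $\bigl(2(1+\nu m) - 4\lambda^2\bigr)^p \leq 1$ and this factor may simply be dropped. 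Transposing the $(1 - p\nu^2 m^2)\int |g|^p/\delta^2$ term to the right-hand side then yields the first stated inequality verbatim.

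For the second conclusion, the parameter sits at the endpoint $\lambda = (1+\nu m)/2$, so I would apply the \emph{second} inequality of Theorem~\ref{th2} with $r = p$ and then pass to the limit $\lambda \nearrow (1+\nu m)/2$. Two algebraic simplifications do the job: first,
$$
\frac{2}{1-\nu m} - \frac{4\lambda^2}{1-\nu^2 m^2} = \frac{2(1+\nu m) - (1+\nu m)^2}{1-\nu^2 m^2} = \frac{(1+\nu m)(1-\nu m)}{1-\nu^2 m^2} = 1,
$$
so the $p$-th power on the right-hand side reduces to $1$; second, Corollary~\ref{cor1} furnishes $c_\nu(m) \to 0$, while $\lambda(1-\lambda) = (1-\nu^2 m^2)/4$ at the endpoint. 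Substituting these into the coefficient of $\int |g|^p/\delta^{2-m}$ gives
$$
\frac{4p\cdot (m-1)(1-\nu^2 m^2)/4}{(1-\nu^2 m^2)\,\delta_0^m} = \frac{p(m-1)}{\delta_0^m},
$$
which is precisely what is required.

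The only real subtlety is the limit $\lambda \nearrow (1+\nu m)/2$ in the second case, since Theorem~\ref{th2} is phrased with strict inequality $\lambda < (1+\nu m)/2$. This is routine: all coefficients on both sides depend continuously on $\lambda$ (continuity of $c_\nu(m)$ in $\lambda$ follows from its defining transcendental equation and the implicit function theorem), so the inequality survives the passage to the endpoint for any fixed admissible $g$. I do not anticipate any deeper obstacle, as both conclusions are direct algebraic specializations of Theorem~\ref{th2} combined with Corollary~\ref{cor1}.
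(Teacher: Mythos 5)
Your route is exactly the paper's: the corollary is obtained by setting $r=p$ in Theorem~\ref{th2} and imposing $2(1+\nu m)-4\lambda^2\leq 1$ for the first inequality and $\frac{2}{1-\nu m}-\frac{4\lambda^2}{1-\nu^2m^2}=1$ (i.e.\ $\lambda=(1+\nu m)/2$, where $c_\nu(m)\to 0$ by Corollary~\ref{cor1} and $(m-1)\lambda(1-\lambda)=(m-1)(1-\nu^2m^2)/4$) for the second. Your treatment of the endpoint case is in fact more careful than the paper's, which does not comment on the passage to the limit $\lambda\nearrow(1+\nu m)/2$ at all.

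There is, however, one step you assert without checking, and it does not hold as stated: you claim the lower bound $\lambda\geq\frac{\sqrt{1+2\nu m}}{4}$ is "engineered precisely so that $2(1+\nu m)-4\lambda^2\leq 1$." Solving $2(1+\nu m)-4\lambda^2\leq 1$ gives $\lambda^2\geq\frac{1+2\nu m}{4}$, i.e.\ $\lambda\geq\frac{\sqrt{1+2\nu m}}{2}$, not $\frac{\sqrt{1+2\nu m}}{4}$. With the threshold as written one only gets $2(1+\nu m)-4\lambda^2\leq\frac{7+6\nu m}{4}>1$, so the factor cannot simply be dropped for $\lambda\in\bigl[\frac{\sqrt{1+2\nu m}}{4},\frac{\sqrt{1+2\nu m}}{2}\bigr)$ and the first inequality of the corollary is not established on that subrange. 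This is evidently a typo in the corollary's hypothesis (the paper's own one-line justification states the condition $2(1+\nu m)-4\lambda^2\leq 1$ directly), and the correct threshold $\frac{\sqrt{1+2\nu m}}{2}$ is still compatible with the upper bound since $\sqrt{1+2\nu m}\leq 1+\nu m$; but your proof inherits the error because you asserted the implication instead of verifying the arithmetic. A complete write-up should either correct the lower bound to $\frac{\sqrt{1+2\nu m}}{2}$ or restrict to the subrange where the bracket is indeed at most $1$.
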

Now, we establish other $L_2$ inequalities. Namely, the following theorem is valid.

\begin{thm} \label{th3}Suppose that $p\geq 1, r\in [1,p]$, and let $f$ be an absolutely continuous function in $[a, b]$ such that $f(a)=f(b) = 0$. If $m>1, \nu\in [0,1/m]$ and $\lambda\in\left[0,\frac{1+\nu m}{2}\right)$, then
\begin{multline}\nonumber
\frac{1-\nu^2 m^2}{4}\int\limits_a^b \frac{|g(x)|^2}{\delta(x)^2} dx +\frac{c_\nu^2(m)+(m-1)(\lambda-\lambda^2)}{\delta_0^m}\int\limits_a^b \frac{|g(x)|^2}{\delta(x)^{2-m}} dx\leq\\
 4\left(\frac{1+\nu m}{2}-\lambda^2\right) \int\limits_a^b|g'(x)|^2 dx,
\end{multline}
and if  $m>0, \nu\in [0,1/m)$ and $\lambda\leq 0$, then
$$
\frac{1-\nu^2 m^2}{4}\int\limits_a^b \frac{|g(x)|^2}{\delta(x)^2} dx +\frac{c_{\nu}^2(m)}{\delta_0^m}\int\limits_a^b \frac{|g(x)|^2}{\delta(x)^{2-m}} dx \leq \left(2(1+\nu m)+|\lambda|\right) \int\limits_a^b|g'(x)|^2 dx,
$$
where $\delta(x) = \min\{b-x,x-a\}$, $\delta_0= \frac{b-a}{2}$ and the constant  $c_\nu(m)$  is a number satisfying the conditions
$$
(1-2\lambda)J_\nu\left(\frac{2}{m}c_\nu(m)\right)+2c_\nu(m)J'_\nu\left(\frac{2}{m}c_\nu(m)\right) = 0 \quad\text{and}\quad\frac{2}{m}c_\nu(m)\in(0,j_{\nu}).
$$
\end{thm}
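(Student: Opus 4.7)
The plan is to mirror the proof of Theorem \ref{th2}, but in the exponent $p=2$, instead of the convex inequality (\ref{el_ineq}) the bound $|g||g'| \leq \tfrac{|g|^2}{x^2}\cdot x + x|g'|^2$ is wasteful; it is sharper to feed the intermediate one-dimensional estimate into the Opial-type inequalities (\ref{opil_in_1}) and (\ref{opil_in_2}). Concretely, I will first prove the two inequalities on $[0,1]$ by taking the test function to be $f(x)=|g(x)|^2$ in the appropriate one-dimensional $L_1$ estimate, and then deduce the $[a,b]$ versions from a rescaling/gluing step identical to the one used to pass from Lemma \ref{l3} to Theorem \ref{th1}.

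For the first inequality, assume $m>1$ and $\lambda\in[0,(1+\nu m)/2)$ and apply Corollary \ref{cor2} to $f(x)=|g(x)|^2$. The hypotheses are met: $f$ is absolutely continuous on $[0,1]$ with $f(0)=0$, and the integrability $f'/x\in L^1[0,1]$ follows from (\ref{opil_in_1}) together with $\int_0^1|g'|^2\,dx<\infty$. Since $f'(x)=2|g(x)||g'(x)|\,\mathrm{sign}\,g(x)$ a.e., Corollary \ref{cor2} yields
\begin{equation*}
\frac{1-\nu^2m^2}{4}\int_0^1\frac{|g|^2}{x^2}dx+\bigl(c_\nu^2(m)+(m-1)(\lambda-\lambda^2)\bigr)\int_0^1\frac{|g|^2}{x^{2-m}}dx \leq 2\!\left(\frac{1+\nu m}{2}-\lambda^2\right)\!\int_0^1\frac{|g||g'|}{x}dx,
\end{equation*}
and one more application of (\ref{opil_in_1}) with $\rho=1$ replaces the right-hand side by $4\bigl(\tfrac{1+\nu m}{2}-\lambda^2\bigr)\int_0^1|g'|^2\,dx$. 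For the second inequality, assume $\lambda\leq 0$ and apply the second part of Lemma \ref{l3} to $f=|g|^2$; this time the right-hand side becomes $(1+\nu m)\int_0^1\tfrac{|g||g'|}{x}dx+2|\lambda|\int_0^1|g||g'|\,dx$, and using (\ref{opil_in_1}) on the first integral and (\ref{opil_in_2}) with $\rho=1$ on the second gives the bound $\bigl(2(1+\nu m)+|\lambda|\bigr)\int_0^1|g'|^2\,dx$, as required.

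Finally, to promote the $[0,1]$ estimates to $[0,\rho]$, substitute $G(\tau)=g(\rho\tau)$ and use $G'(\tau)=\rho g'(\rho\tau)$: an elementary change of variable shows that the Hardy term $\int|g|^2/x^2$ and the Dirichlet term $\int|g'|^2$ scale in the same way, while $\int|g|^2/x^{2-m}$ picks up an extra $\rho^{-m}$; hence the constant on the right is preserved and the middle coefficient acquires the factor $1/\delta_0^m$ once we set $\rho=\delta_0=(b-a)/2$. Writing the $[0,\delta_0]$ inequality for the functions $\tau\mapsto g(\tau+a)$ and $\tau\mapsto g(b-\tau)$ and adding the two resulting inequalities produces exactly the statement of Theorem \ref{th3}, since $\delta(x)=x-a$ on $[a,(a+b)/2]$ and $\delta(x)=b-x$ on $[(a+b)/2,b]$. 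The only mildly delicate point is the verification that $f=|g|^2$ inherits the absolute continuity and the $f'/x\in L^1$ condition needed to invoke Lemma \ref{l3} and Corollary \ref{cor2}; once this is in hand, the rest is a bookkeeping exercise combining the $L_1$ Hardy-type bounds with the Opial inequalities.
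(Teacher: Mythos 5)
Your proposal is correct and follows essentially the same route as the paper: apply Corollary \ref{cor2} (respectively the second part of Lemma \ref{l3}) to $f=|g|^2$, then absorb the resulting terms $\int |g||g'|/x\,dx$ and $\int |g||g'|\,dx$ via the Opial inequalities (\ref{opil_in_1}) and (\ref{opil_in_2}), and finally rescale and glue the two half-intervals as in Theorem \ref{th1}. The only difference is that you spell out the scaling/gluing step and the admissibility of $f=|g|^2$, which the paper leaves implicit.
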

\begin{proof} The application of Corollary \ref{cor2} to  the function $f(x)=|g(x)|^2\in C^1[0,\rho]$ yields
\begin{multline}\nonumber
\frac{1-\nu^2 m^2}{4}\int\limits_0^1 \frac{|g(x)|^2}{x^2} dx +(c_\nu^2(m)+(m-1)(\lambda-\lambda^2))\int\limits_0^1 \frac{|g(x)|^2}{x^{2-m}} dx \leq\\
2\left(\frac{1+\nu m}{2}-\lambda^2\right) \int\limits_0^1\frac{|g(x)||g'(x)|}{x} dx.
\end{multline}
Combining the last inequality and (\ref{opil_in_1}), we get
\begin{multline}\nonumber
\frac{1-\nu^2 m^2}{4}\int\limits_0^1 \frac{|g(x)|^2}{x^2} dx +(c_\nu^2(m)+(m-1)(\lambda-\lambda^2))\int\limits_0^1 \frac{|g(x)|^2}{x^{2-m}} dx \leq\\
 4\left(\frac{1+\nu m}{2}-\lambda^2\right) \int\limits_0^1|g'(x)|^2 dx.
\end{multline}
The application of lemma \ref{l3} to  the function $f(x)=|g(x)|^2\in C^1[0,\rho]$ yields
\begin{multline}\nonumber
\frac{1-\nu^2 m^2}{4}\int\limits_0^1 \frac{|g(x)|^2}{x^2} dx +c_{\nu}^2(m)\int\limits_0^1 \frac{|g(x)|^2}{x^{2-m}} dx\\
2\left(\frac{1+\nu m}{2}\right) \int\limits_0^1\frac{|g(x)||g'(x)|}{x} dx+2|\lambda| \int\limits_0^1|g(x)||g'(x)| dx,
\end{multline}
where  $\lambda\leq 0$. Hence, using a(\ref{opil_in_1}) and (\ref{opil_in_2}), we obtain
$$
\frac{1-\nu^2 m^2}{4}\int\limits_0^1 \frac{|g(x)|^2}{x^2} dx +c_{\nu}^2(m)\int\limits_0^1 \frac{|g(x)|^2}{x^{2-m}} dx \leq \left(2(1+\nu m)+|\lambda|\right) \int\limits_0^1|g'(x)|^2 dx.
$$
\end{proof}
\section{Inequalities in convex domains of the Euclidean space $\mathbb{R}^n$. }\label{sec2}
This section is devoted to multidimensional analogues. We consider inequalities in a convex domain  $\Omega$ with finite inner radius.
To prove inequalities in domains we use Avkhadiev F.G. method from \cite{Av3} (see also \cite{Avkh_Nas_SibMJ_2014}, \cite{Av4}). This method based on special approximations and partitions of the integration domain $\Omega$ by cubes. In the case of convex domains the proof multidimensional Hardy inequalities is reduced to the application of one-dimensional inequalities (see \cite{AW_ZAMM} and \cite{BEL}).

Let $\Omega$ is  an open and convex set in $\mathbb{R}^n$ with finite inradius
$$
\delta_0= \delta_0(\Omega) = \sup_{z\in \Omega}\delta(x),
$$
where $\delta(x) = dist (x,\partial\Omega)$. By $C_0^1(\Omega)$ denote the family of continuously differentiable functions $f :\Omega \to \mathbb{R}$ with compact supports
lying in $\Omega$.

As we mentioned above, for a convex domain the situation is simple and one-dimensional inequalities are extended straightforwardly to the spatial case. Namely,  Avkhadiev's method is reduced to the following statement.

\textbf{Theorem A.}\emph{ Let $\Omega$ be an open, convex set in $\mathbb{R}^n$ with finite inradius $\delta_0 = \delta_0(\Omega)$. If, for $\alpha\in(0,\delta_0]$ and nonnegative constancs $b,c$
$$
\int\limits_0^\alpha \frac{|f'(t)|^p}{t^{s-p}}dt\geq b^2\int\limits_0^\alpha \frac{|f(t)|^p}{t^s}dt+\frac{c^2}{\delta_0^m}\int\limits_0^\alpha \frac{|f(t)|^p}{t^{s-m}}dt, \ \  \ f\in C_0^1(0,2\alpha),
$$
then
$$
\int\limits_\Omega\frac{ |\nabla f(x)|^p}{\delta(x)^{s-p}}dx\geq b^2\int\limits_\Omega \frac{|f(x)|^p}{\delta(x)^s}dx+\frac{c^2}{\delta_0^m}\int\limits_\Omega \frac{|f(x)|^p}{\delta(x)^{s-m}}dx,  \ \  \ f\in C_0^1(\Omega).
$$
}
\subsection{$L_1$ inequalities.}
Combining Theorem A and Theorem \ref{th1}, we obtain the following statement.
\begin{thm}
 Let $\Omega$ be an open, convex set in $\mathbb{R}^n$ with finite inradius $\delta_0 = \delta_0(\Omega)$. Suppose that $m>0$, $\nu\in (0,1/m]$, $f\in C_0^1(\Omega)$ and $|\nabla f|/\delta(\cdot)\in L^1(\Omega)$. If  $\nu\in (0,1/m]$ and $\lambda\in\left[0,\frac{1+\nu m}{2}\right)$, then
 \begin{multline}\nonumber
\frac{1-\nu^2 m^2}{4}\int\limits_\Omega \frac{|f(x)|}{\delta(x)^2} dx +\frac{c_\nu^2(m)}{\delta_0^m}\int\limits_\Omega \frac{|f(x)|}{\delta(x)^{2-m}} dx \leq\\
\left(\frac{1+\nu m}{2}-\lambda^2\right) \int\limits_\Omega\frac{|\nabla f(x)|}{\delta(x)} dx+\frac{\lambda^2-\lambda}{\delta_0}\int\limits_\Omega|\nabla f (x)| dx,
\end{multline}
and if $ \lambda\leq 0$, then
\begin{multline}\nonumber
\frac{1-\nu^2 m^2}{4}\int\limits_\Omega \frac{|f(x)|}{\delta(x)^2} dx +\frac{c_\nu^2(m)}{\delta^m_0}\int\limits_\Omega \frac{|f(x)|}{\delta(x)^{2-m}} dx\leq\\
  \frac{1+\nu m}{2}\int\limits_\Omega \frac{|\nabla f(x)|}{\delta(x)}dx-\frac{\lambda}{\delta_0}\int\limits_\Omega |\nabla f(x)|dx,
\end{multline}
where the constant  $c_\nu(m)$  is a number satisfying the conditions
$$
(1-2\lambda)J_\nu\left(\frac{2}{m}c_\nu(m)\right)+2c_\nu(m)J'_\nu\left(\frac{2}{m}c_\nu(m)\right) = 0\quad\text{and}\quad\frac{2}{m}c_\nu(m)\in(0,j_{\nu}).
$$
\end{thm}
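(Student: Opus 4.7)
The plan is to derive the spatial inequalities by applying Avkhadiev's lifting principle, Theorem~A, to the one-dimensional inequalities of Theorem~\ref{th1}. Two minor modifications of Theorem~A as stated above are needed. First, Theorem~A involves only one $|f'|$-term on the right-hand side, whereas Theorem~\ref{th1} produces two, so a linear extension is required. This extension is immediate: Avkhadiev's argument---approximating $\Omega$ by a finite union of cubes of side $\le 2\delta_0(\Omega)$, applying the one-dimensional inequality along axis-aligned chords, using $|\partial_i f|\le|\nabla f|$, and integrating over the transverse directions---is linear in the right-hand-side terms, so an additional term of the form $(B/\delta_0)\int_0^\alpha|f'(t)|\,dt$ is lifted to $(B/\delta_0)\int_\Omega|\nabla f(x)|\,dx$ with the same constant $B$.

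The second modification concerns matching the parameter $\delta_0$. Theorem~\ref{th1} produces its one-dimensional inequality on a segment $[a,b]$ with $\delta_0^{(1)}=(b-a)/2$, while the hypothesis of Theorem~A requires the inequality with the fixed value $\delta_0=\delta_0(\Omega)\ge\alpha=(b-a)/2$. For the case $\lambda\in\bigl[0,\tfrac{1+\nu m}{2}\bigr)$, both adjustments weaken the one-dimensional inequality: since $\alpha\le\delta_0(\Omega)$ we have $c_\nu^2(m)/\alpha^m\ge c_\nu^2(m)/\delta_0(\Omega)^m$, and since $\lambda^2-\lambda\le 0$ on $[0,1]$ we also have $(\lambda^2-\lambda)/\alpha\le(\lambda^2-\lambda)/\delta_0(\Omega)$. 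Both replacements produce a valid (weaker) one-dimensional inequality matching the required premise, and the extended Theorem~A then yields the first stated spatial inequality.

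For the case $\lambda\le 0$, the same weakening works on the left-hand side, but fails for the coefficient $-\lambda/\alpha$, which increases as $\alpha$ decreases. I would handle this by applying Avkhadiev's construction at the specific level $\alpha=\delta_0(\Omega)$---the maximal admissible value---at which the one-dimensional coefficient $-\lambda/\alpha$ coincides exactly with the target multidimensional coefficient $-\lambda/\delta_0(\Omega)$. This is the main technical point of the proof: the correction term in Theorem~\ref{th1} has the ``wrong'' monotonicity in $\alpha$ in this regime, so one cannot obtain the target by a uniform replacement over all $\alpha$; instead one must apply the lifting at this particular $\alpha$. Once this choice is made, Theorem~A delivers the second stated spatial inequality, and no further one-dimensional estimates beyond Theorem~\ref{th1} are required.
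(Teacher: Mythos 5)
Your treatment of the first case ($\lambda\in[0,\tfrac{1+\nu m}{2})$) is correct and is essentially what the paper does: the paper's entire proof is the sentence ``Combining Theorem A and Theorem \ref{th1}, we obtain the following statement,'' so your explicit verification that both replacements $c_\nu^2(m)/\alpha^m\mapsto c_\nu^2(m)/\delta_0^m$ and $(\lambda^2-\lambda)/\alpha\mapsto(\lambda^2-\lambda)/\delta_0$ only weaken the one-dimensional inequality (the first on the left, the second on the right, using $\lambda^2-\lambda\le 0$) is a useful spelling-out of a step the authors leave implicit. The extension of Theorem A to a right-hand side with two derivative terms is likewise asserted rather than proved, but the paper makes exactly the same leap, so this is not a point of divergence.

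The second case ($\lambda\le 0$) contains a genuine gap. You correctly diagnose the problem --- the coefficient $-\lambda/\alpha$ is nonnegative and increases as $\alpha$ decreases, so it cannot be uniformly replaced by $-\lambda/\delta_0(\Omega)$ on the right-hand side --- but your proposed repair, ``apply Avkhadiev's construction at the specific level $\alpha=\delta_0(\Omega)$,'' is not available. Theorem A requires the one-dimensional hypothesis to hold for \emph{every} $\alpha\in(0,\delta_0]$ precisely because the underlying partition of $\Omega$ (whether by normal segments up to the ridge set, or by axis-aligned chords of the cube decomposition) produces one-dimensional fibers of all lengths up to $2\delta_0$; one has no freedom to run the construction only on fibers of maximal length. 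For instance, in a rectangle the normal segments emanating from points near a corner have lengths filling out all of $(0,\delta_0]$, and on such a fiber of half-length $\ell<\delta_0$ the available one-dimensional inequality carries the coefficient $-\lambda/\ell>-\lambda/\delta_0$; summing over fibers then yields a right-hand side strictly larger than the claimed $-\tfrac{\lambda}{\delta_0}\int_\Omega|\nabla f|\,dx$, and the chain of inequalities does not close. To repair this one would need either a different one-dimensional estimate whose zero-order derivative term has a coefficient that is monotone the right way in $\alpha$ (e.g.\ absorbing $-\tfrac{\lambda}{\alpha}\int_0^\alpha|f'|\,dt$ into $-\lambda\int_0^\alpha|f'(t)|\,dt/t$ at the cost of changing the leading constant), or a version of Theorem A whose hypothesis is indexed by the fiber length; as written, your argument establishes only the first of the two stated spatial inequalities.
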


Combining Theorem A and Corollary \ref{cor2}, we obtain the following theorem.

\begin{thm}
  Let $\Omega$ be an open, convex set in $\mathbb{R}^n$ with finite inradius $\delta_0 = \delta_0(\Omega)$. Suppose that $m>1$, $f\in C_0^1(\Omega)$ and $|\nabla f|/\delta(x)\in L^1(\Omega)$. If  $\nu\in (0,1/m]$ and $\lambda\in\left[0,\frac{1+\nu m}{2}\right)$, then
\begin{multline}\nonumber
\frac{1-\nu^2 m^2}{4}\int\limits_\Omega \frac{|f(x)|}{\delta(x)^2} dx +\frac{c^2_\nu(m)+(m-1)(\lambda-\lambda^2)}{\delta_0^m}\int\limits_\Omega\frac{|f(x)|}{\delta(x)^{2-m}} dx \leq\\
 \left(\frac{1+\nu m}{2}-\lambda^2\right) \int\limits_\Omega \frac{|\nabla f(x)|}{\delta(x)} dx,
\end{multline}
where   $c= c_\nu(m)$  is a number satisfying the conditions
$$
(1-2\lambda)J_\nu\left(\frac{2}{m}c_\nu(m)\right)+2c_\nu(m)J'_\nu\left(\frac{2}{m}c_\nu(m)\right) = 0\quad\text{and}\quad\frac{2}{m}c_\nu(m)\in(0,j_{\nu}).
$$
\end{thm}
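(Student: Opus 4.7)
The plan is to combine two ingredients already at our disposal: the one-dimensional inequality of Corollary \ref{cor2} and Avkhadiev's reduction scheme stated as Theorem~A. The previous theorem in the excerpt (the $L_1$ multidimensional inequality proved by combining Theorem~A with Theorem~\ref{th1}) uses exactly the same template, so the argument here is its direct analogue with Corollary \ref{cor2} playing the role of Theorem \ref{th1}; the only genuinely new work is a rescaling and a normalization to fit Theorem~A's format.

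\medskip

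First I would rescale Corollary \ref{cor2} from $[0,1]$ to an arbitrary interval $[0,\alpha]$ with $\alpha\in(0,\delta_0]$. Writing $g(t)=f(\alpha t)$, one checks by the substitution $x=\alpha t$ that
\begin{equation*}
\int_0^1\frac{|g(t)|}{t^2}\,dt=\alpha\int_0^\alpha\frac{|f(x)|}{x^2}\,dx,\quad
\int_0^1\frac{|g(t)|}{t^{2-m}}\,dt=\alpha^{1-m}\int_0^\alpha\frac{|f(x)|}{x^{2-m}}\,dx,\quad
\int_0^1\frac{|g'(t)|}{t}\,dt=\alpha\int_0^\alpha\frac{|f'(x)|}{x}\,dx.
\end{equation*}
Substituting these into Corollary \ref{cor2} and dividing by $\alpha$ yields
\begin{equation*}
\frac{1-\nu^2m^2}{4}\int_0^\alpha\frac{|f(x)|}{x^2}\,dx+\frac{c_\nu^2(m)+(m-1)(\lambda-\lambda^2)}{\alpha^m}\int_0^\alpha\frac{|f(x)|}{x^{2-m}}\,dx\leq\left(\frac{1+\nu m}{2}-\lambda^2\right)\int_0^\alpha\frac{|f'(x)|}{x}\,dx,
\end{equation*}
valid for any absolutely continuous $f$ on $[0,\alpha]$ with $f(0)=0$, in particular for any $f\in C_0^1(0,2\alpha)$.

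\medskip

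Next I would put this in the form required by Theorem~A. The coefficient $D:=\frac{1+\nu m}{2}-\lambda^2$ on the right-hand side is strictly positive: the hypotheses $\nu\in(0,1/m]$ and $\lambda\in[0,\frac{1+\nu m}{2})$ force $\lambda<1$, hence $\lambda^2<\lambda<\frac{1+\nu m}{2}$. Dividing the displayed 1D inequality by $D$ brings it into exactly the shape of the assumption of Theorem~A with $p=1$, $s=2$, and
\begin{equation*}
b^2=\frac{1-\nu^2m^2}{4D},\qquad c^2=\frac{c_\nu^2(m)+(m-1)(\lambda-\lambda^2)}{D}.
\end{equation*}
Theorem~A, taking $\alpha=\delta_0$, then delivers
\begin{equation*}
\int_\Omega\frac{|\nabla f(x)|}{\delta(x)}\,dx\geq b^2\int_\Omega\frac{|f(x)|}{\delta(x)^2}\,dx+\frac{c^2}{\delta_0^m}\int_\Omega\frac{|f(x)|}{\delta(x)^{2-m}}\,dx
\end{equation*}
for every $f\in C_0^1(\Omega)$, and multiplying through by $D$ recovers precisely the claimed inequality.

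\medskip

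I do not expect a genuine obstacle here: all the heavy lifting (the Bessel identities of Lemma \ref{l1}, the one-dimensional Hardy identity on $[0,1]$, the auxiliary estimate of \cite{Avkh_Nas_SibMJ_2014} used in Corollary \ref{cor2}, and the slice/partition argument encoded in Theorem~A) is already done. The only places where one must be a little careful are the bookkeeping of the powers of $\alpha$ in the rescaling, the verification that $D>0$ under the stated parameter range so that division by $D$ is legitimate, and noting that the integrability hypothesis $|\nabla f|/\delta\in L^1(\Omega)$ is what guarantees that Theorem~A's conclusion is meaningful (the right-hand side is automatically finite for $f\in C_0^1(\Omega)$).
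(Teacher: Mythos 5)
Your proposal is correct and follows exactly the paper's route: the paper proves this theorem in one line by "combining Theorem A and Corollary \ref{cor2}," and your rescaling to $[0,\alpha]$ plus normalization by $D=\frac{1+\nu m}{2}-\lambda^2$ just makes explicit the bookkeeping that the paper leaves implicit (note only that Theorem~A wants the one-dimensional hypothesis with $\delta_0^m$ for every $\alpha\in(0,\delta_0]$, which your rescaled inequality gives since $\alpha^{-m}\geq\delta_0^{-m}$).
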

\subsection{$L_p$ inequalities.}
Combining Theorem \ref{th2}, Theorem \ref{th3}  and Theorem A., we obtain spatial analogues of the one-dimensional inequalities from section 2.2. Namely, we get

\begin{thm}  Let $\Omega$ be an open, convex set in $\mathbb{R}^n$ with finite inradius $\delta_0 = \delta_0(\Omega)$. Suppose that $p\geq 1$ and $f\in C_0^1(\Omega)$. If $m>1, \nu\in [0,1/m]$ and $\lambda\in\left[0,\frac{1+\nu m}{2}\right)$, then
\begin{multline}\nonumber
(1-p\nu^2 m^2)\int\limits_\Omega \frac{|g(x)|^p}{\delta(x)^2} dx +\frac{4p\left(c_\nu^2(m)+(m-1)(\lambda -\lambda^2)\right)}{\delta_0^m}\int\limits_\Omega \frac{|g(x)|^p}{\delta(x)^{2-m}} dx \leq\\
 p^p\left(2(1+\nu m)-4\lambda^2\right)^p \int\limits_\Omega \frac{|\nabla g(x)|^{p}}{\delta(x)^{2-p}} dx,
\end{multline}
and if $m>1, \nu\in [0,1/m)$ and $\lambda\in\left[0,\frac{1+\nu m}{2}\right)$, then
\begin{multline}\nonumber
\int\limits_\Omega \frac{|g(x)|^p}{\delta(x)^2} dx +\frac{4p\left(c_\nu^2(m)+(m-1)(\lambda -\lambda ^2)\right)}{(1-\nu^2 m^2)\delta_0^m}\int\limits_\Omega \frac{|g(x)|^p}{\delta(x)^{2-m}} dx \leq\\
p^p \left(\frac{2}{1-\nu m}-\frac{4\lambda^2}{1-\nu^2m^2}\right)^p\int\limits_\Omega \frac{|g'(x)|^{p}}{\delta(x)^{2-p}} dx,
\end{multline}
where  the constant  $c_\nu(m)$  is a number satisfying the conditions
$$
(1-2\lambda)J_\nu\left(\frac{2}{m}c_\nu(m)\right)+2c_\nu(m)J'_\nu\left(\frac{2}{m}c_\nu(m)\right) = 0\quad\text{and}\quad\frac{2}{m}c_\nu(m)\in(0,j_{\nu}).
$$
\end{thm}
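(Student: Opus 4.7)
The plan is to apply Avkhadiev's transfer principle (Theorem A above) directly to the one-dimensional $L_p$ estimates already established in Theorem \ref{th2}. Since Theorem \ref{th2} was stated on an arbitrary segment $[a,b]$ with $\delta(x) = \min\{x-a, b-x\}$ and $\delta_0 = (b-a)/2$, its one-dimensional form is exactly the template that Theorem A is designed to lift to a convex domain $\Omega \subset \mathbb{R}^n$.

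First I specialize Theorem \ref{th2} to $r = p$. For an arbitrary $\alpha \in (0, \delta_0(\Omega)]$, taking $a = 0$ and $b = 2\alpha$, the first estimate of Theorem \ref{th2} says: for every $g \in C_0^1(0, 2\alpha)$,
\begin{multline*}
(1-p\nu^2 m^2)\int\limits_0^{2\alpha}\frac{|g(x)|^p}{\delta(x)^2}\,dx + \frac{4p(c_\nu^2(m)+(m-1)(\lambda-\lambda^2))}{\alpha^m}\int\limits_0^{2\alpha}\frac{|g(x)|^p}{\delta(x)^{2-m}}\,dx \\
\leq p^p(2(1+\nu m)-4\lambda^2)^p \int\limits_0^{2\alpha}\frac{|g'(x)|^p}{\delta(x)^{2-p}}\,dx.
\end{multline*}
Dividing both sides by the positive constant $K := p^p(2(1+\nu m)-4\lambda^2)^p$, which is strictly positive since $\lambda < (1+\nu m)/2$, recasts this exactly as the hypothesis of Theorem A with $s = 2$, the given $m$, and the identifications
$$
b^2 = \frac{1-p\nu^2 m^2}{K}, \qquad c^2 = \frac{4p(c_\nu^2(m)+(m-1)(\lambda-\lambda^2))}{K}.
$$
Invoking Theorem A on $\Omega$ and multiplying the resulting spatial inequality back by $K$ recovers the first displayed claim of the theorem.

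The second displayed inequality is obtained in exactly the same way from the second estimate of Theorem \ref{th2} (still with $r = p$): divide by $K' := p^p\bigl(\tfrac{2}{1-\nu m} - \tfrac{4\lambda^2}{1-\nu^2 m^2}\bigr)^p$, which is positive under the sharper hypothesis $\nu \in [0, 1/m)$, apply Theorem A, and rescale by $K'$. The entire argument is thus bookkeeping around Theorem A once Theorem \ref{th2} is available; the only step worth checking is the positivity of the multiplicative constants $K$ and $K'$ under the stated constraints on $\lambda$ and $\nu m$ (and, if one wishes to stay inside the strict scope of Theorem A, the nonnegativity of $c_\nu^2(m) + (m-1)(\lambda-\lambda^2)$, which follows from $m>1$ and $\lambda \in [0,1]$). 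No new one-dimensional or pointwise analysis is needed, so there is no substantive obstacle once the one-dimensional machinery of Section \ref{sec1} has been developed.
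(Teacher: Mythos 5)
Your proposal is correct and follows essentially the same route as the paper, which proves this theorem in one line by combining Theorem A with the one-dimensional $L_p$ inequalities of Theorem \ref{th2} specialized to $r=p$. The normalization by $K$ and $K'$ that you spell out is exactly the bookkeeping the paper leaves implicit, so there is nothing substantive to add.
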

\begin{thm}\label{th7} Let $\Omega$ be an open, convex set in $\mathbb{R}^n$ with finite inradius $\delta_0 = \delta_0(\Omega)$. Suppose that $f\in C_0^1(\Omega)$ and $|\nabla f|\in L^2(\Omega)$. If $m>1, \nu\in [0,1/m]$ and $\lambda\in\left[0,\frac{1+\nu m}{2}\right)$, then
\begin{multline}\nonumber
\frac{1-\nu^2 m^2}{4}\int\limits_\Omega \frac{|g(x)|^2}{\delta(x)^2} dx +\frac{c_\nu^2(m)+(m-1)(\lambda-\lambda^2)}{\delta_0^m}\int\limits_\Omega \frac{|g(x)|^2}{\delta(x)^{2-m}} dx \leq\\
 4\left(\frac{1+\nu m}{2}-\lambda^2\right) \int\limits_\Omega|\nabla g(x)|^2 dx,
\end{multline}
and if  $m>0, \nu\in [0,1/m)$ and $\lambda\leq 0$, then
$$
\frac{1-\nu^2 m^2}{4}\int\limits_\Omega \frac{|g(x)|^2}{\delta(x)^2} dx +\frac{c_{\nu}^2(m)}{\delta_0^m}\int\limits_\Omega \frac{|g(x)|^2}{\delta(x)^{2-m}} dx \leq \left(2(1+\nu m)+|\lambda|\right) \int\limits_\Omega|\nabla g(x)|^2 dx,
$$
where  the constant  $c_\nu(m)$  is a number satisfying the conditions
$$
(1-2\lambda)J_\nu\left(\frac{2}{m}c_\nu(m)\right)+2c_\nu(m)J'_\nu\left(\frac{2}{m}c_\nu(m)\right) = 0\quad\text{and}\quad\frac{2}{m}c_\nu(m)\in(0,j_{\nu}).
$$
\end{thm}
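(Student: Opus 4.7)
The plan is to derive Theorem \ref{th7} as a direct consequence of Theorem A combined with the one-dimensional $L_2$ inequalities of Theorem \ref{th3}. The match is essentially tailored: Theorem A lifts an inequality of the form
$$\int_0^\alpha \frac{|f'(t)|^p}{t^{s-p}}\,dt \geq b^2\int_0^\alpha \frac{|f(t)|^p}{t^s}\,dt + \frac{c^2}{\delta_0^m}\int_0^\alpha \frac{|f(t)|^p}{t^{s-m}}\,dt$$
to its spatial analogue on the convex domain $\Omega$, and Theorem \ref{th3} furnishes precisely such an inequality in the case $p=2$, $s=2$ (so that the $|f'|^p/t^{s-p}$ factor becomes $|g'|^2$).

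First I would extract the half-interval version underlying Theorem \ref{th3}: for $\alpha\in(0,\delta_0]$ and $g\in C_0^1(0,2\alpha)$, the reasoning used to prove Theorem \ref{th3} (namely, applying Lemma \ref{l3} to $|g|^2$, combined with Opial's inequality \eqref{opil_in_1} and the scaling $x=\alpha\tau$) yields
$$\frac{1-\nu^2 m^2}{4}\int_0^\alpha \frac{|g(t)|^2}{t^2}\,dt + \frac{c_\nu^2(m)+(m-1)(\lambda-\lambda^2)}{\alpha^m}\int_0^\alpha \frac{|g(t)|^2}{t^{2-m}}\,dt \leq 4\Bigl(\tfrac{1+\nu m}{2}-\lambda^2\Bigr)\int_0^\alpha |g'(t)|^2\,dt.$$
Dividing through by $4\bigl((1+\nu m)/2-\lambda^2\bigr)$ puts this inequality into exactly the form required by Theorem A with $p=s=2$,
$$b^2 = \frac{1-\nu^2 m^2}{16\bigl((1+\nu m)/2-\lambda^2\bigr)}, \qquad c^2 = \frac{c_\nu^2(m)+(m-1)(\lambda-\lambda^2)}{4\bigl((1+\nu m)/2-\lambda^2\bigr)},$$
both nonnegative under the hypotheses $\nu\in[0,1/m]$, $m>1$, $\lambda\in[0,(1+\nu m)/2)$. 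Setting $\alpha=\delta_0$ and invoking Theorem A produces a spatial inequality on $\Omega$, and multiplying the result by the factor $4\bigl((1+\nu m)/2-\lambda^2\bigr)$ recovers the first claim. The second claim ($m>0$, $\nu\in[0,1/m)$, $\lambda\leq 0$) is obtained in the same way, starting from the second inequality of Theorem \ref{th3} (which uses Opial's inequality \eqref{opil_in_2} in addition to \eqref{opil_in_1} to absorb the $|\lambda|\int|g||g'|$ term) and again normalizing so that the $\int|g'|^2$ coefficient is $1$ before applying Theorem A.

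The only nontrivial obstacle is confirming that the one-dimensional inequality in Theorem \ref{th3}, which is stated for $g$ absolutely continuous on $[a,b]$ with $g(a)=g(b)=0$, restricts correctly to the half-interval setting $g\in C_0^1(0,2\alpha)$ required by Theorem A: this is clear from the symmetric decomposition used in the proof of Theorem \ref{th1} (and of \ref{th3}), where the claim on $[a,b]$ is obtained as the sum of two half-interval inequalities. After this check the argument is purely mechanical bookkeeping of constants, with no further analytic content beyond what Theorem \ref{th3} and Theorem A already provide.
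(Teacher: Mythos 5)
Your proposal is correct and follows essentially the same route as the paper, which simply states that Theorem \ref{th7} is obtained by combining the one-dimensional $L_2$ inequalities of Theorem \ref{th3} with Theorem A; your normalization of the constants to fit the form required by Theorem A and your remark on reducing to the half-interval case are exactly the (omitted) bookkeeping the paper relies on. The only minor point worth tightening is that Theorem A requires the one-dimensional inequality for every $\alpha\in(0,\delta_0]$, not just $\alpha=\delta_0$, but this follows at once from your scaled inequality since $\alpha^{-m}\geq\delta_0^{-m}$ for $m>0$.
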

\begin{cor}\label{cor5}
If  $m>1, \nu\in [0,1/m]$, $\lambda\in\left[0,\frac{1+\nu m}{2}\right)$ and $ 4\left(\frac{1+\nu m}{2}-\lambda^2\right) \leq 1$, then the first constant $(1-\nu^2 m^2)/4$ is sharp and
$$
c_\nu^2(m)+(m-1)(\lambda-\lambda^2)\leq  C_\nu^2(m).
$$
\end{cor}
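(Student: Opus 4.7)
The plan is to obtain both assertions of Corollary~\ref{cor5} by a direct comparison of Theorem~\ref{th7} with the sharp Avkhadiev--Wirths inequality (\ref{intr_f1}).

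Under the hypothesis $4\left(\frac{1+\nu m}{2} - \lambda^2\right) \leq 1$, the coefficient of $\int_\Omega |\nabla g|^2\,dx$ on the right-hand side of the first inequality of Theorem~\ref{th7} is bounded by $1$. Replacing this coefficient by $1$ reduces Theorem~\ref{th7} to
$$
\frac{1-\nu^2 m^2}{4}\int_\Omega \frac{|g(x)|^2}{\delta(x)^2}\,dx + \frac{c_\nu^2(m)+(m-1)(\lambda-\lambda^2)}{\delta_0^m}\int_\Omega \frac{|g(x)|^2}{\delta(x)^{2-m}}\,dx \leq \int_\Omega |\nabla g(x)|^2\,dx
$$
for every $g \in C_0^1(\Omega)$, a two-weight Hardy inequality of precisely the same form as (\ref{intr_f1}).

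By the sharpness of (\ref{intr_f1}) proved in~\cite{AW_ZAMM}, when the first weight carries coefficient $\frac{1-\nu^2 m^2}{4}$, the largest admissible coefficient of the second weight is $\frac{C_\nu^2(m)}{\delta_0^m}$. Comparing the reduced inequality above with (\ref{intr_f1}) therefore forces the bound
$$
c_\nu^2(m)+(m-1)(\lambda-\lambda^2) \leq C_\nu^2(m),
$$
which is the second claim. The sharpness of the first constant $\frac{1-\nu^2 m^2}{4}$ in the reduced inequality is then immediate, since this coefficient already coincides with the sharp first constant of (\ref{intr_f1}); an extremizing sequence for (\ref{intr_f1}) certifies that it cannot be enlarged here.

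The only substantive input is the cited sharpness of the Avkhadiev--Wirths inequality; the remaining argument consists of one substitution and a structural comparison, so there is no significant technical obstacle.
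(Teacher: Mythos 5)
Your proposal is correct and takes essentially the same route as the paper, whose proof of Corollary~\ref{cor5} is exactly the one-line observation that it "follows from Theorem~\ref{th7} and sharp inequality~(\ref{intr_f1})"; you have simply filled in the details of that comparison (bounding the right-hand coefficient by $1$, then invoking the maximality of $C_\nu^2(m)$ as the second constant).
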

\begin{proof} This corollary immediately follows from Theorem \ref{th7} and sharp inequality~(\ref{intr_f1}).
\end{proof}
\bigskip

This work was supported be the grant of the President of the Russian Federation  MK-709.2019.1.


\begin{thebibliography}{99}
%
%

\bibitem{Table} \textsc{M. Abramowitz, I.A. Stegun}, Handbook of mathematical functions with formulas, graphs, and mathematical tables. USA: National Bureau of Standards Applied Mathematics Series (1964).
\bibitem{APR} \textsc{V.~Alvarez, D.~Pestana, J.~M.~Rodr\'{i}guez}, Isoperimetric inequalities in Riemann surfaces of infinite type, Revista Matem\`{A}tica Iberoamericana, \textbf{15}:2 (1999), 353--425.
\bibitem{AvLap} \textsc{F.~Avkhadiev, A.~Laptev}, Hardy inequalities for nonconvex domains, Around the research of Vladimir Maz'ya, v. I, Int. Math. Ser. (N. Y.), 11, Springer, New York, 2010, 1-12.
\bibitem{Avkh_IM} \textsc{F.~G.~Avkhadiev}, A geometric description of domains whose Hardy constant is equal to 1/4, ~Izvestiya: Mathematics \textbf{78}:5 (2014), 855--876.
\bibitem{AW_ZAMM} \textsc{F.~G.~Avkhadiev, K.-J.~Wirths},  Unified  Poincar\'{e} and Hardy inequalities with sharp constants for convex domains, Z. Angew. Math. Mech. \textbf{87} (2007), 632--642.
\bibitem{AW_Lamb} \textsc{F.~G.~Avkhadiev, K.-J.~Wirths}, Sharp Hardy-type inequalities with Lamb's constants,  Bull. Belg. Math. Soc. Simon Stevin. \textbf{18} (4) (2011), 723--736.
\bibitem{Avkh_Nas_SibMJ_2014} \textsc{F.G. Avkhadiev, R.G. Nasibullin}, Hardy-type inequalities in arbitrary domains with finite inner radius. Siberian Mathematical Journal. \textbf{55}:2 (2014), 191-200.
\bibitem{Avkh_AIA} \textsc{F.~G.~Avkhadiev}, Integral inequalities of Hardy and Rellich in domains satisfying an exterior sphere condition, Algebra i Analiz, \textbf{30}:2 (2018), 18--44.	 	


\bibitem{Avkh_JMA} \textsc{F.~G. Avkhadiev},  Hardy-Rellich inequalities in domains of the Euclidean space, J. Math Anal. Appl. \textbf{442} (2016), 469--484.

\bibitem{Av3} \textsc{F.~G.~Avkhadiev}, Hardy type inequalities in higher dimensions with explicit estimate of  constants, Lobachevskii J. Math., \textbf{21} (2006), 3--31

\bibitem{Av4} \textsc{F.~G.~Avkhadiev}, Hardy-type inequalities on planar and spatial open sets. Proceedings of the Steklov Institute of Mathematics, \textbf{255}:1 (2006), 2--12


\bibitem{AvkhMS_15} \textsc{F.~G.~Avkhadiev}, Integral inequalities in domains of hyperbolic type and their applications, Sb. Math. \textbf{206}:12 (2015), 1657--1681

\bibitem{ANSh_2018} \textsc{F.~G.~Avkhadiev, R.~G.~Nasibullin, I.~K.~Shafigullin}, Lp-Versions of One Conformally Invariant Inequality, Russian Mathematics volume, \textbf{62} (2018), 76--79.

\bibitem{ANSh_2019} \textsc{F.~G.~Avkhadiev, R.~G.~Nasibullin, I.~K.~Shafigullin}, Conformal invariants of hyperbolic planar domains, Ufa Math. J. \textbf{11}:2 (2019), 3--18.

\bibitem{BEL}  \textsc{A.~A.~Balinsky, W.~D.~Evans, R.~T.~Lewis} \emph{The Analysis and Geometry of Hardy's Inequality}, Heidelberg - New York - Dordrecht - London: Universitext, Springer, 2015.

\bibitem{BM} \textsc{H.~Brezis, M.~Marcus}, Hardy's inequalities revisited, Dedicated to E. De Giorgi, Ann. Scuola Norm. Sup. Pisa Cl. Sci. (4), \textbf{25}:1-2 (1998), 217--237.


\bibitem{BST} \textsc{V.~I.~Burenkov, A.~Senouci, T.~V.~Tararykova}, Hardy-type inequality for $0 < p < 1$ and hypodecreasing functions, Eurasian mathematical journal, \textbf{1}:3 (2010), 27 -- 42.

\bibitem{Bur} \textsc{V.~I.~Burenkov}, Function spaces. Main integral inequalities related to $L_p$-spaces. Peoples' Friendship University, Moscow, 1989 (in Russian).

\bibitem{Bur1}  \textsc{V.~I.~Burenkov}, On the exact constant in the Hardy inequality with $0 < p < 1$ for monotone. Proc. Steklov Inst. Math., \textbf{194}: 4 (1993), 59 -- 63.

\bibitem{Dav}\textsc{E.~B.~Davies}, The Hardy constant, Quart. J. Math. Oxford Ser. (2), \textbf{46}:4 (1995), 417-431.

\bibitem{Dav1}\textsc{E.~B.~Davies}, A review of Hardy inequalities, The Maz'ya anniversary collection, v. 2 (Rostock, 1998), Oper. Theory Adv. Appl., 110, BirkhЁauser, Basel, 1999, 55-67.

\bibitem{EL} \textsc{W.~D.~Evans, R.~T.~Lewis}, Hardy and Rellich inequalities with remainders, J. Math. Inequal. \textbf{1}:4 (2007), 473--490.

\bibitem{F} \textsc{J.~L.~Fern\'{a}ndez}, Domains with Strong Barrier, Revista Matematica Iberoamericana,  \textbf{5}:2 (1989), 47--65.


\bibitem{FR} \textsc{J.~L.~Fern\'{a}ndez, J.~M.~Rodr\'{i}guez}, The exponent of convergence of Riemann surfaces, bass Riemann surfaces, Ann. Acad. Sci. Fenn. Series A. I. Mathematica,  \textbf{15} (1990), 165-183.

\bibitem{FMT} \textsc{S.~Filippas, V.~G.~Maz'ya, A.~Tertikas}, On a question of Brezis and Marcus, Calc. Var. Partial Differential Equations, \textbf{25}:4 (2006), 491--501.


\bibitem{HoHoL} \textsc{M.~Hoffmann-Ostenhof, T.~Hoffmann-Ostenhof, A.~Laptev}, A geometrical version of Hardy's inequality, J. Funct. Anal. \textbf{189}:2 (2002), 539--548.

\bibitem{HLP} \textsc{G.~H.~Hardy, J.~E.~Littlewood, G.~Polya}, Inequalities, Cambridge Univ. Press, Cambridge, 1973.

\bibitem{MatSob} \textsc{T.~Matskewich, P.~E.~Sobolevskii}, The best possible constant in generalized Hardy's inequality for convex domains in $\mathbb{R}^n$, Nonlinear Anal., \textbf{28}:9 (1997), 1601--1610.


\bibitem{MMP} \textsc{M.~Marcus, V.~J.~Mitzel, Y.~Pinchover}, On the best constant for Hardy's inequality in $R^n$, Trans. Amer. Math. Soc. \textbf{350}:8 (1998), 3237--3255.


\bibitem{Maz} \textsc{V.~G.~Maz'ya}, \emph{Sobolev spaces}, Springer (1985)

\bibitem{Nas_Sib_2019} \textsc{R.~G.~Nasibullin}, Brezis-Marcus type inequalities with Lamb constant, Sib.$\grave{E}$lektron. Mat. Izv., \textbf{16} (2019), 449--464.

\bibitem{Nas_Ufa_2017} \textsc{R.~G.~Nasibullin}, Sharp Hardy type inequalities with weights depending on Bessel function, Ufa Math. J., \textbf{9}:1 (2017), 89--97.

\bibitem{Nas_LJM_2019} \textsc{R.~G.~Nasibullin}, Multidimensional Hardy Type Inequalities with Remainders, Lobachevskii J. Math. \textbf{40}:9 (2019), 1383--1396.

\bibitem{Nas_LJM_2016} \textsc{R.~G.~Nasibullin}, Hardy type inequalities with weights dependent on the bessel functions Lobachevskii Journal of Mathematics. \textbf{37}:3 (2016),  274--283.

\bibitem{Nas_MS_2019} \textsc{R.~G.~Nasibullin}, A geometrical version of Hardy-Rellich type inequalities, Mathematica Slovaca. \textbf{69}:4 (2019), 785-800

\bibitem{PS} \textsc{G.~Psaradakis},  $L_1$ Hardy inequalities with weights, J. Geom. Anal. \textbf{23}:4, (2013) 1703--1728.

\bibitem{Shum}  \textsc{D.~T.~Shum}, On a class of new inequalities, Trans. Amer. Math. Soc. \textbf{204} (1975), 299--341.

\bibitem{Sobolev}  \textsc{S.~L.~Sobolev}, \emph{Selected problems in the theory of function spaces and generalized functions}, Nauka, Moscow, 1989. 255 pp. ISBN: 5-02-000052-3 [Russian]

\bibitem{TZ} \textsc{A.~Tertikas, N.~.B.~Zographopoulos}, Best constants in the Hardy-Rellich inequalities and related improvements, Advances in Mathematics, \textbf{209}:2 2007, 407--459.

\bibitem{Tidblom} \textsc{J.~Tidblom},  A geometrical version of Hardy's inequality for $ W^{1,p}_0(\Omega)$, Proc. Amer. Math. Soc. \textbf{132} (2004), 2265--2271.

\bibitem{Tidblom1} \textsc{J.~Tidblom},  A Hardy inequality in the half-space, Journal of Functional Analysis,  \textbf{221}:2 (2005), 482--495.

\bibitem{Watson} \textsc{G.~N~Watson}, \emph{A threatise on the theory of the Bessel Functions}, Cambridge: Cambridge Univ. Press, 1966.



\end{thebibliography}

\end{document}